\newtheorem{theorem}{Theorem}[section]
\newtheorem{lemma}[theorem]{Lemma}
\newtheorem{proposition}[theorem]{Proposition}
\newtheorem{corollary}[theorem]{Corollary}
\newcommand{\closure}[2][3]{{}%allows for two adjacent but not touching overlines
\mkern#1mu\overline{\mkern-#1mu#2}} %
\title{On the Morita Frobenius numbers \\ of blocks of finite reductive groups}
\author{Niamh Farrell}
\begin{document}

\maketitle

%%%%%%%%%%%%%%%%%%%%%%%%%%%%%%%%%%%%%%%%%%%%%%%%%%%%%%%%
%%%%%%%% ABSTRACT %%%%%%%%%%%%%%%%%%%%%%%%%%%%%%%%%%%%%%%%%%%
%%%%%%%%%%%%%%%%%%%%%%%%%%%%%%%%%%%%%%%%%%%%%%%%%%%%%%%%

\begin{quote} 
ABSTRACT. We show that the Morita Frobenius number of the blocks of the alternating groups, the finite groups of Lie type in describing characteristic, and the Ree and Suzuki groups is 1. We also show that the Morita Frobenius number of almost all of the unipotent blocks of the finite groups of Lie type in non-defining characteristic is 1, and that in the remaining cases it is at most 2. 
\end{quote}

%%%%%%%%%%%%%%%%%%%%%%%%%%%%%%%%%%%%%%%%%%%%%%%%%%%%%%%%
%%%%%%%% INTRODUCTION %%%%%%%%%%%%%%%%%%%%%%%%%%%%%%%%%%%%%%%%
%%%%%%%%%%%%%%%%%%%%%%%%%%%%%%%%%%%%%%%%%%%%%%%%%%%%%%%%

\section{INTRODUCTION}

Let $\ell$ be a prime number, let $k = \overline{\mathbb{F}}_\ell$ be an algebraic closure of the field of $\ell$ elements and let $A$ be a finite dimensional $k$-algebra. For $a \in \mathbb{N}$, the \textit{$a$-th Frobenius twist of $A$}, denoted by $A^{(\ell^a)}$, is a $k$-algebra with the same underlying ring structure as $A$, endowed with a new action of the scalars of $k$ given by $\lambda.x = \lambda^{\frac{1}{\ell^a}}x$ for all $\lambda \in k$, $x \in A$. Two finite dimensional algebras $A$ and $B$ are \textit{Morita equivalent} if mod$(A)$ and mod$(B)$ are equivalent $k$-linear categories. By definition, $A$ and $A^{(\ell^a)}$ are isomorphic as rings, however, they need not even be Morita equivalent as $k$-algebras. The \textit{Morita Frobenius number} of a $k$-algebra $A$, denoted by $mf(A)$, is the least integer $a$ such that $A$ is Morita equivalent to $A^{(\ell^a)}$. 

The concept of Morita Frobenius numbers was introduced by Kessar in \cite{K2} in the context of Donovan's Conjecture in block theory. Donovan's Conjecture implies that Morita Frobenius numbers of $\ell$-blocks of finite groups are bounded by a function which depends only on the size of the defect groups of the block. Little is known about the values of Morita Frobenius numbers in general, but it is known that a block of a group algebra can have Morita Frobenius number greater than 1 \cite{B/K}. In this paper we calculate the Morita Frobenius numbers of a large class of blocks of finite reductive groups. We have used GAP \cite{GAP4} to check that the Morita Frobenius number of blocks of simple sporadic groups and their covers is 1. See Sections~\ref{sec:grouptheory} to~\ref{sec:unip} for an explanation of the notation in the following theorem.

\begin{theorem}
\label{thrm:maintheorem}
Let $b$ be an $\ell$-block of a quasi-simple finite group $G$. Let $\closure{G} = G/ Z(G)$. Suppose that one of the following holds.
\begin{enumerate}[(a)]
\item $\closure{G}$ is an alternating group
\item $\closure{G}$ is a finite group of Lie type in defining characteristic
\item $\closure{G}$ is a finite group of Lie type in non-defining characteristic, $b$ dominates a unipotent block of $\closure{G}$, and $b$ is not one of the following blocks of $E_8$
	\begin{itemize}
		\item $b = b_{E_8}(\phi_1^2.E_6(q), {E_6}[\theta^i]) $ $ (i=1,2) $ with $\ell=2$ and  $q \equiv 1$ modulo 4 
		\item $b = b_{E_8}({\phi_2^2}.{^2E}_6(q), {^2E_6}[\theta^i])$ $(i=1,2) $ with $\ell \equiv 2$ mod $3$ and $q \equiv 2$ modulo $\ell$
	\end{itemize}
\end{enumerate}
Then $mf(b) = 1$. In the excluded cases of part (c), $mf(b)\leq 2$. 
\end{theorem}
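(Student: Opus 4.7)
The plan is to treat the three cases by three rather different techniques, reflecting the representation-theoretic machinery available for each family.

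For case (a), I would reduce from a block $b$ of a cover of an alternating group to a block of (a suitable cover of) the symmetric group $S_n$ via Clifford theory. Blocks of $S_n$ are defined over the prime field $\mathbb{F}_\ell$: the ordinary characters of $S_n$ are rational-valued and the decomposition matrices integral, so the block idempotents lie in $\mathbb{F}_\ell S_n$ and each block admits a natural $\mathbb{F}_\ell$-form. Any $k$-algebra Morita equivalent to an $\mathbb{F}_\ell$-algebra has Morita Frobenius number $1$, which handles the case of $S_n$. Descent to the index-two subgroup $A_n$ yields either a single $A_n$-block or a Galois-conjugate pair, and in both situations $mf(b)=1$ follows. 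Central covers add extra central characters but the same argument applies block by block.

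For case (b) one exploits the extremely rigid block structure of $G^F$ in defining characteristic $\ell = p$. Every $\ell$-block of $G^F$ is either the principal block (with a Sylow $p$-subgroup as defect group) or of defect zero, by classical work on reductive groups in defining characteristic. Defect-zero blocks are matrix algebras over $k$ and hence Morita equivalent to $k$ itself; the principal block is preserved setwise by the Frobenius twist and is canonically isomorphic to the original via the natural $\mathbb{F}_\ell$-form of $kG^F$. The Suzuki and Ree groups need to be noted separately because of their twisted Frobenius endomorphisms, but the same argument goes through.

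Case (c) is the main work. The strategy is a Jordan-decomposition-style reduction, using Morita equivalences of Bonnaf\'e--Rouquier type between the unipotent $\ell$-block $b$ of $G^F$ and a corresponding block of a Levi subgroup $L^F$ determined by the Brou\'e--Malle--Michel cuspidal pair attached to $b$. This transports the problem to an analogous one about the cuspidal unipotent character of $L^F$, and ultimately to whether that character is stable under the Galois action induced by $\lambda \mapsto \lambda^\ell$ on $k$. For Levi subgroups of classical type all cuspidal unipotent characters are rational, so $mf(b)=1$ is immediate. For exceptional-type Levis one works through the short list of non-rational cuspidal unipotent characters, and the assertion is that only the four characters $E_6[\theta^i]$ and ${}^2\!E_6[\theta^i]$ ($i=1,2$), arising inside $E_8$, give rise to blocks whose Frobenius twist is not Morita equivalent to the original for trivial reasons.

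The main obstacle therefore lies inside case (c), in the case-by-case bookkeeping for exceptional types. The congruence conditions on $q$ and $\ell$ in the statement come precisely from deciding when the $E_6$- or ${}^2\!E_6$-Levi of $E_8$ carrying the problematic cuspidal character is an $\ell$-split $F$-stable Levi of $E_8$, and these are controlled by the factorisation of $\ell$-cyclotomic polynomials evaluated at $q$. For each of the four resulting blocks, the Frobenius $\lambda \mapsto \lambda^\ell$ on $k$ interchanges $\theta$ with $\theta^2$ and therefore sends $b$ to its Galois partner, while two applications return to $b$, giving the bound $mf(b) \le 2$.
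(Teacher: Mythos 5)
Your proposal outlines a plausible-sounding strategy, but there are genuine gaps in all three cases that would require different techniques to close, and in case (b) there is an outright error.

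In case (a), the reduction from $A_n$ to $S_n$ via rationality of characters is fine and matches the paper. However, the claim that ``central covers add extra central characters but the same argument applies block by block'' is too quick. The faithful blocks of the double covers $\widetilde{S}_n$ contain only spin characters, and these are \emph{not} rational in general. The paper resolves this by analyzing the parity $\epsilon(\lambda)$ of a strict partition: when $\epsilon(\lambda)=0$ the character $\chi_\lambda$ is rational by the spin analogue of the Murnaghan--Nakayama rule; when $\epsilon(\lambda)=1$ one must check that the associate characters $\chi_\lambda$ and $\chi_\lambda^a$ lie in the same block and that $\chi_\lambda(g)=-\chi_\lambda^a(g)$ (using Cabanes' Theorems A and B on $\ell$-bar cores), so their sum is rational; the defect-zero case must be treated separately. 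Your proposal doesn't see that the rationality argument simply fails for individual spin characters and that a different, more combinatorial analysis is needed.

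In case (b), the claim that ``every $\ell$-block of $G^F$ is either the principal block or of defect zero'' is wrong when $\textbf{G}$ is simply connected and $Z(\textbf{G}^F)$ is non-trivial. By Humphreys, $k\textbf{G}^F$ has $|Z(\textbf{G}^F)|+1$ blocks: one of defect zero (Steinberg) and $|Z(\textbf{G}^F)|$ of full defect, parametrized by the characters of the center. Your argument handles the principal block and the Steinberg block but says nothing about the remaining $|Z(\textbf{G}^F)|-1$ full-defect blocks, which are precisely the hard part. The paper's resolution is to construct a concrete group automorphism (the $\mathbb{F}_\ell$-Frobenius $F_\ell$, raised to a suitable power) and to check by hand, for each possible cyclic or Klein-four center, that it induces the Galois action on the central block idempotents; this then transports to the blocks of $k\textbf{G}^F$ via Lemma~\ref{lem:groupaut}. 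Nothing resembling this appears in your proposal.

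In case (c), the invocation of ``Morita equivalences of Bonnaf\'e--Rouquier type'' between $b$ and a Levi block is misplaced. Bonnaf\'e--Rouquier equivalences apply to blocks in a Lusztig series $\mathcal{E}_\ell(\textbf{G}^F,s)$ with $s$ a non-central semisimple $\ell'$-element, hence never to unipotent blocks, which are exactly what the theorem concerns. The machinery actually used in the paper is the Cabanes--Enguehard/Enguehard $e$-Harish-Chandra parametrization of unipotent blocks by unipotent $e$-cuspidal pairs $(\textbf{L},\lambda)$ of central $\ell$-defect, together with: a rationality argument on $\lambda$ via the Deligne--Lusztig character formula (Lemma~\ref{lem:lambdarat}); defect-group computations via relative Weyl groups (Theorem~\ref{thrm:key}(d)) to land in the cyclic/dihedral case of Proposition~\ref{lem:mainlemma}(d); and, for a single configuration ($E_8$, $E_6$-Levi, $\ell\ge 5$, $e=1$), a genuine Morita equivalence due to Puig combined with a twisted group algebra computation over $D_{12}$. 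The congruence conditions do not come from $\ell$-splitness of the Levi as you suggest, but from the bad-prime analysis and the character field of $E_6[\theta^i]$ being $\mathbb{Q}(\theta)$: for $\ell\equiv 1\bmod 3$ the block is Galois-stable, and for $\ell\equiv 2\bmod 3$ (including $\ell=2$) two applications of $\hat\sigma$ are needed, yielding $mf(b)\le 2$ only in the genuinely open cases.
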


We start with some general results on the Morita Frobenius numbers of blocks in Section~\ref{sec:grouptheory}. Section~\ref{sec:AltGps} deals with the case of the alternating groups, and Section~\ref{sec:definingchar} deals with finite groups of Lie type in defining characteristic. In Section~\ref{sec:unip} we first present key results from $e$-Harish Chandra theory and unipotent block theory, followed by the results for finite groups of Lie type in non-defining characteristic. Finally, Section~\ref{sec:mainproof} contains the proof of Theorem~\ref{thrm:maintheorem}.

%%%%%%%%%%%%%%%%%%%%%%%%%%%%%%%%%%%%%%%%%%%%%%%%%%%%%%%%
%%%%%%%% RESULTS FROM FINITE GROUP THEORY %%%%%%%%%%%%%%%%%%%%%%%%%%%%%%
%%%%%%%%%%%%%%%%%%%%%%%%%%%%%%%%%%%%%%%%%%%%%%%%%%%%%%%%
\section{GENERAL RESULTS ON MORITA FROBENIUS NUMBERS OF BLOCKS}
\label{sec:grouptheory}

Throughout, $\ell$ is a prime number, $k$ is an algebraically closed field of characteristic $\ell$, and $G$ is a finite group.

\subsection{Results on $k$-algebras.}
\label{subsec:kalgs}

Let $A$ and $B$ be finite dimensional $k$-algebras and let $A_0$ and $B_0$ be basic algebras of $A$ and $B$ respectively. We define the \textit{Frobenius number} of $A$ to be the the least integer $a$ such that $A \cong A^{(\ell^a)}$ as $k$-algebras, and denote it by $frob(A)$. Recall that $A$ and $B$ are Morita equivalent if and only if $A_0 \cong B_0$ as $k$-algebras, and note that $A_0^{(\ell)}$ is a basic algebra of $A^{(\ell)}$. Therefore, $1 \leq mf(A_0) \leq frob(A_0) = mf(A) \leq frob(A)$ for any basic algebra $A_0$ of $A$. Recall that $A$ \textit{has an $\mathbb{F}_{\ell}$-form} if there is a $k$-vector space basis of $A$ such that all structure constants lie in $\mathbb{F}_{\ell}$. By \cite[Lemma 2.1]{K}, $A$ has an $\mathbb{F}_{\ell}$-form if and only if $A \cong A^{(\ell)}$ as $k$-algebras -- that is, if and only if $frob(A) = 1$.

\subsection{Results from Block Theory.}
\label{subsec:blocktheory}

Let $b$ be a block of $kG$. By this we mean that $b$ is a primitive idempotent in $Z(kG)$. We denote the Morita Frobenius and Frobenius numbers of $kGb$ by $mf(b)$ and $frob(b)$, respectively. Let $\sigma: k \rightarrow k$ be the Frobenius automorphism given by $\lambda \mapsto \lambda^\ell$ for all $\lambda \in k$. We also denote by $\sigma: kG \rightarrow kG$ the induced \textit{Galois conjugation} map on $kG$, defined by 
\[ \sigma \left(\sum_{g \in G} \alpha_g g \right) = \sum_{g \in G} \alpha_g^{\ell}g  \]
for all $\sum_{g \in G} \alpha_g g \in kG$. Although not an isomorphism of $k$-algebras, Galois conjugation is a ring isomorphism so it permutes the blocks of $kG$. We call $\sigma(b)$ (or $kG\sigma(b)$) the \textit{Galois conjugate} of $b$ (resp. $kGb$), and we say that two blocks $b$ and $c$ of $kG$ are \textit{Galois conjugate} if $b = \sigma^n(c)$ for some positive integer $n$. 

\begin{lemma}[Benson and Kessar {\cite{B/K}}]
\label{lem:galconj}
There is a $k$-algebra isomorphism $kGb^{(\ell)} \cong kG\sigma(b)$ between the first Frobenius twist of $kGb$ and the Galois conjugate of $kGb$. 
\end{lemma}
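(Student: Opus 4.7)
The plan is to exhibit an explicit isomorphism by showing that the Galois conjugation map $\sigma$ itself, viewed through the lens of the twisted scalar action, becomes $k$-linear. More precisely, $\sigma : kG \to kG$ is by construction a ring automorphism that permutes block idempotents, so its restriction $\sigma : kGb \to kG\sigma(b)$ is a bijective ring homomorphism. It fails to be a $k$-algebra map only because $\sigma(\lambda x) = \lambda^\ell \sigma(x)$ rather than $\lambda \sigma(x)$. The Frobenius twist $(kGb)^{(\ell)}$ is designed precisely to absorb that discrepancy.

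Concretely, I would define $\phi : (kGb)^{(\ell)} \to kG\sigma(b)$ by $\phi(x) = \sigma(x)$ on underlying elements. To see that $\phi$ is $k$-linear, recall that the scalar action on the domain is $\lambda \cdot x = \lambda^{1/\ell} x$, with the right-hand product computed in the original algebra $kGb$. Hence
\[
\phi(\lambda \cdot x) \;=\; \sigma\bigl(\lambda^{1/\ell} x\bigr) \;=\; (\lambda^{1/\ell})^{\ell}\,\sigma(x) \;=\; \lambda\,\phi(x),
\]
so the semilinearity of $\sigma$ cancels exactly against the reparametrized scalar action on the twist. Multiplicativity and bijectivity of $\phi$ are inherited directly from the fact that $\sigma$ is a ring isomorphism, and the multiplication on $(kGb)^{(\ell)}$ agrees with that on $kGb$ by definition of the twist.

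There is really no single step that is the ``hard'' step — the content of the argument is purely bookkeeping between the two nonstandard scalar actions. The only thing to be careful about is to check that $\sigma$ genuinely maps the block $kGb$ into the block $kG\sigma(b)$ (rather than merely into $kG$); this follows because $\sigma$ is a ring automorphism and $b$ is a central idempotent, so $\sigma(b)$ is again a central idempotent and $\sigma(kGb) = \sigma(kG)\sigma(b) = kG\sigma(b)$. Once that is noted, the map $\phi$ above furnishes the required $k$-algebra isomorphism and completes the proof.
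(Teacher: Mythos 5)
Your proof is correct. The paper itself does not reprove this lemma --- it cites it directly from Benson and Kessar [B/K] --- so there is no in-paper argument to compare against, but the map you write down (the Galois conjugation $\sigma$ reinterpreted as a $k$-linear map out of the twist, with the $\ell$-semilinearity of $\sigma$ cancelling the $\ell^{-1}$ in the twisted scalar action) is exactly the standard argument, and your bookkeeping $\phi(\lambda \cdot x) = \sigma(\lambda^{1/\ell}x) = \lambda^{(1/\ell)\cdot\ell}\sigma(x) = \lambda\phi(x)$ is complete and accurate. The observation that $\sigma$ carries $kGb$ onto $kG\sigma(b)$ because $b$ is a central idempotent and $\sigma$ a ring automorphism is the one sanity check needed, and you supply it.
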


We fix an \textit{$\ell$-modular system} $(K, \mathcal{O}, k)$ with $K$ a field of characteristic 0 containing a $|G|$-th root of unity, $\nu: K \rightarrow \mathbb{Z} \cup \{ \infty \}$ a complete discrete valuation on $K$, $\mathcal{O}$ the valuation ring of $\nu$ with maximal ideal $\mathfrak{m}$, and $k$ the residue field $\mathcal{O}/\mathfrak{m}$. The canonical quotient map $\mathcal{O}G \rightarrow kG$ induces a bijection between the set of blocks of $\mathcal{O}G$ and the set of blocks of $kG$. If $b$ is a block of $kG$, we denote the corresponding block of $\mathcal{O}G$ by $\tilde{b}$. Blocks $\tilde{b}$ and $\tilde{c}$ of $\mathcal{O}G$ are said to be \textit{Galois conjugate} if $b$ and $c$ are Galois conjugate. 

Let Irr$_K(G)$ denote the set of $K$-valued irreducible characters of $G$ and let $e_{\chi}$ be the central idempotent of $KG$ corresponding to $\chi \in $ Irr$_K(G)$. Let Irr$_K(b) = \{ \chi \in \mbox{ Irr}_K(G) ~|~ \tilde{b}e_{\chi} = e_{\chi} \}$ denote the set of irreducible characters belonging to the block $b$. We fix an automorphism $\hat\sigma : K \rightarrow K$ such that $\hat\sigma(\zeta) = \zeta^{\ell}$ for any $\ell'$-root of unity $\zeta$ in $K$. Then $\hat\sigma$ induces an action on $KG$ via 
\[\hat\sigma\left(\sum_{g \in G} \alpha_g g\right)  = \sum_{g \in G} \hat\sigma(\alpha_g) g\]
for all $\sum_{g \in G} \alpha_g g \in KG$, and an action on Irr$_K(G)$ via 
\[^{\hat\sigma}\chi(g) = \hat\sigma(\chi(g))\]
for all $\chi \in $ Irr$_K(G)$ and all $g \in G$. Note that although $\hat\sigma$ may not preserve $\mathcal{O}$, it induces an action on the set of blocks compatible with the action of $\sigma$ on the blocks of $kG$. More precisely, we have the following.

\begin{lemma}
\label{lem:sigmahat} Let $b$ be a block of $kG$. Then
\begin{enumerate}[(a)]
\item$\hat{\sigma}(\tilde{b}) = \widetilde{\sigma(b)}$, and 
\item Irr$_K(\sigma(b)) = \{ ^{\hat\sigma}{\chi} ~|~ \chi \in \mbox{ Irr}_K(b)\}$.
\end{enumerate}
\end{lemma}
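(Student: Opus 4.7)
The plan is to derive both parts from the character-theoretic expansion
\[
\tilde b \;=\; \sum_{\chi \in \mathrm{Irr}_K(b)} e_\chi,
\qquad \text{where } e_\chi = \frac{\chi(1)}{|G|} \sum_{g \in G} \chi(g^{-1}) g.
\]
Since $\hat\sigma$ is a ring automorphism of $KG$ that fixes $\mathbb{Q}$ pointwise, applying it coefficient-by-coefficient gives $\hat\sigma(e_\chi) = e_{{}^{\hat\sigma}\chi}$, using the defining relation ${}^{\hat\sigma}\chi(g) = \hat\sigma(\chi(g))$ and the fact that $\chi(1)/|G| \in \mathbb{Q}$ is $\hat\sigma$-fixed. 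Hence
\[
\hat\sigma(\tilde b) \;=\; \sum_{\chi \in \mathrm{Irr}_K(b)} e_{{}^{\hat\sigma}\chi}.
\]

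For (a) I first need to know that $\hat\sigma(\tilde b)$ actually lies in $\mathcal{O}G$, which is not automatic because $\hat\sigma$ is not assumed to preserve $\mathcal{O}$. The key input is the classical rationality statement that the block idempotents of $\mathcal{O}G$ have all coefficients in $\mathbb{Z}_\ell[\zeta]$, where $\zeta$ is a primitive root of unity of order equal to the $\ell'$-part of $|G|$. Because $\hat\sigma$ sends $\ell'$-roots of unity to $\ell'$-roots of unity, this subring is $\hat\sigma$-stable, so $\hat\sigma(\tilde b) \in \mathbb{Z}_\ell[\zeta]G \subseteq \mathcal{O}G$. Moreover, the reduction map $\mathbb{Z}_\ell[\zeta] \twoheadrightarrow k$ intertwines the restriction of $\hat\sigma$ with the Frobenius map $\sigma$ on $k$, since $\hat\sigma(\zeta)$ reduces to $\bar\zeta^\ell = \sigma(\bar\zeta)$ on each $\ell'$-root of unity. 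Consequently $\hat\sigma(\tilde b)$ reduces mod $\mathfrak{m}$ to $\sigma(b) \in kG$, and uniqueness of block lifts from $kG$ to $\mathcal{O}G$ forces $\hat\sigma(\tilde b) = \widetilde{\sigma(b)}$, proving (a).

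For (b) I compare two decompositions of $\widetilde{\sigma(b)}$ into primitive central idempotents of $KG$. On the one hand, by definition $\widetilde{\sigma(b)} = \sum_{\psi \in \mathrm{Irr}_K(\sigma(b))} e_\psi$; on the other, (a) together with the opening display gives $\widetilde{\sigma(b)} = \sum_{\chi \in \mathrm{Irr}_K(b)} e_{{}^{\hat\sigma}\chi}$. Since $\{e_\psi : \psi \in \mathrm{Irr}_K(G)\}$ is a $K$-linearly independent family of pairwise orthogonal primitive idempotents, the two indexing sets must coincide, yielding $\mathrm{Irr}_K(\sigma(b)) = \{{}^{\hat\sigma}\chi : \chi \in \mathrm{Irr}_K(b)\}$.

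The only real subtlety is the integrality step in (a): one must verify that $\hat\sigma$ carries $\tilde b$ back into $\mathcal{O}G$, which is where the rationality of block idempotents over the unramified extension $\mathbb{Z}_\ell[\zeta]$ of $\mathbb{Z}_\ell$ enters. Once this is in hand, everything else reduces to a direct character-theoretic computation and the uniqueness of block lifts.
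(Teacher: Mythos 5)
Your proof is correct and, for part~(b), is cleaner than the paper's: where the paper proves the two inclusions $\{{}^{\hat\sigma}\chi : \chi \in \mathrm{Irr}_K(b)\} \subseteq \mathrm{Irr}_K(\sigma(b))$ and $\mathrm{Irr}_K(\sigma(b)) \subseteq \{{}^{\hat\sigma}\chi : \chi \in \mathrm{Irr}_K(b)\}$ separately by chasing the identities $\widetilde{\sigma(b)}\,e_{{}^{\hat\sigma}\chi} = e_{{}^{\hat\sigma}\chi}$ and $\tilde b\, e_\chi = e_\chi$, you instead compare the two expansions of $\widetilde{\sigma(b)}$ as sums of orthogonal primitive central idempotents of $KG$ and invoke uniqueness of such a decomposition; this gets the set equality in one step. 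For part~(a) the paper simply cites Kessar, whereas you supply an argument, and the argument you give is essentially the right one: the real content is that $\hat\sigma(\tilde b)$ lands back in $\mathcal{O}G$ even though $\hat\sigma$ is not assumed to preserve $\mathcal O$.

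One point worth tightening: the rationality input you invoke should be stated a bit more carefully. Since $\hat\sigma$ is only a field automorphism of $K$ fixing $\mathbb Q$, it need not fix $\mathbb Z_\ell$ pointwise, so ``$\hat\sigma$ stabilizes $\mathbb Z_\ell[\zeta]$'' is not immediate as phrased. What you actually want is Osima's theorem that $\tilde b$ is supported on $\ell$-regular elements, from which the coefficient of each such $g$ is an $\ell$-integral element of $\mathbb Q(\zeta_{|G|_{\ell'}})$, i.e.\ lies in $\mathbb Z[\zeta_{|G|_{\ell'}}]_{(\ell)}$. Since $\mathbb Q(\zeta_{|G|_{\ell'}})$ is a finite abelian extension of $\mathbb Q$ and $\hat\sigma$ acts on it as the Artin--Frobenius at $\ell$, it does preserve this local ring and reduces to Frobenius on the residue field, which is exactly the intertwining you need. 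With that adjustment, the argument closes as you describe via uniqueness of block lifts.
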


\begin{proof}
For part (a), see Kessar {\cite[Lemma 3.1]{K3}}. For part (b), we first note the following.
\begin{align*}
\hat\sigma (e_{\chi}) 	& = \hspace{1ex}  	\hat\sigma \left(\frac{ \chi(1) } {|G|} \sum_{g \in G} \chi(g^{-1}) g \right)\\
				& =  \hspace{1ex} 	\frac{\chi(1)}{|G|} \sum_{g \in G} \hat\sigma \left(\chi(g^{-1})\right) g\\
				& =  \hspace{1ex} 	\frac{^{\hat\sigma}\chi(1)}{|G|} \sum_{g \in G} {^{\hat\sigma}\chi}(g^{-1}) g\\
				& = \hspace{1ex}    e_{^{\hat\sigma}\chi}
\end{align*}
Suppose that $\chi \in $ Irr$_K(b)$. Then 
\[\widetilde{\sigma(b)} e_{^{\hat\sigma}\chi} = \hat\sigma(\tilde{b}) \hat\sigma(e_{\chi}) = \hat\sigma(\tilde{b} e_{\chi} ) =  \hat\sigma(e_{\chi}) = e_{^{\hat\sigma}\chi},\]
so $^{\hat\sigma}{\chi} \in $ Irr$_K(\sigma(b))$, showing that $ \{ ^{\hat\sigma}{\chi} ~|~ \chi \in \mbox{ Irr}_K(b)\} \subseteq $  Irr$_K(\sigma(b))$.

On the other hand, for any $\psi \in $ Irr$_K(\sigma(b))$, since $\hat{\sigma}$ is an automorphism of $K$ we can define a character $\chi \in $ Irr$_K(G)$ by $\chi(g) = {\hat{\sigma}}^{-1}\left( \psi(g)\right)$ for all $g \in G$, so $^{\hat\sigma}{\chi} = \psi$.  Since $\psi \in $ Irr$_K(\sigma(b))$, $\widetilde{\sigma(b)} e_{\psi} = e_{\psi}$, so 
\[\hat{\sigma}\left(\tilde{b}e_{\chi}\right) = \hat{\sigma} (\tilde{b}) \hat{\sigma} ( e_{\chi} ) = \widetilde{\sigma(b)} e_{^{\hat{\sigma}}\chi} = \widetilde{\sigma(b)} e_{\psi} = e_{\psi} = e_{^{\hat{\sigma}}\chi} = {\hat{\sigma}}(e_{\chi}).\] Therefore $\tilde{b}e_{\chi} = e_{\chi}$ so $\chi \in $ Irr$_K(b)$, hence Irr$_K(\sigma(b))  \subseteq  \{ ^{\hat\sigma}{\chi} ~|~ \chi \in \mbox{ Irr}_K(b)\}$ and the result follows.
\end{proof}

\begin{proposition}
\label{lem:mainlemma}
Let $b$ be a block of $kG$. Suppose that one of the following holds.
\begin{itemize}
	\item[(a)] $b \in \mathbb{Q}G$
	\item[(b)] There exist $\chi_1, \dots, \chi_r \in $ Irr$_K(b)$ for some $r \geq 1$ such that $\left(\chi_1 + \dots + \chi_r\right) (g) \in \mathbb{Q}$ for all $g \in G$
	\item[(c)] There exists $\chi \in $ Irr$_K(b)$ such that $\chi(1)_{\ell} = |G|_{\ell}$
	\item[(d)] The defect groups of $b$ are cyclic or dihedral
\end{itemize}
Then $mf(b) = 1$. 
\end{proposition}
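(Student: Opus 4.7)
The overarching strategy is to use Lemma~\ref{lem:galconj}, which reduces $mf(b) = 1$ to the assertion that $kGb$ is Morita equivalent to $kG\sigma(b)$. In cases (a)--(c) the plan is to prove the stronger statement $\sigma(b) = b$, so that $kGb^{(\ell)} \cong kGb$ as $k$-algebras on the nose; case (d) instead runs through the basic algebra and the Frobenius-number bookkeeping from \S\ref{subsec:kalgs}.

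For (a), if the lift $\tilde b$ lies in $\mathbb{Q}G$ then it is fixed by the automorphism $\hat\sigma$ of Lemma~\ref{lem:sigmahat}, because $\hat\sigma$ acts trivially on $\mathbb{Q}$; Lemma~\ref{lem:sigmahat}(a) gives $\widetilde{\sigma(b)} = \hat\sigma(\tilde b) = \tilde b$ and hence $\sigma(b) = b$. For (b), the hypothesis makes $\chi_1 + \cdots + \chi_r$ a $\hat\sigma$-fixed class function on $G$; by linear independence of irreducible characters, $\hat\sigma$ must permute the multiset $\{\chi_1,\ldots,\chi_r\}$, so $^{\hat\sigma}\chi_1 = \chi_j$ for some $j$. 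This character lies in Irr$_K(\sigma(b))$ by Lemma~\ref{lem:sigmahat}(b) and in Irr$_K(b)$ by hypothesis, and since blocks partition Irr$_K(G)$ the equality $\sigma(b) = b$ follows. For (c), the condition $\chi(1)_\ell = |G|_\ell$ means $\chi$ has $\ell$-defect zero, so $\{\chi\}$ is a full block of $G$ and $kGb \cong M_{\chi(1)}(k)$; a matrix algebra has an obvious $\mathbb{F}_\ell$-form via elementary matrix units, so $frob(kGb) = 1$ by the criterion recalled in \S\ref{subsec:kalgs}, whence $mf(b) = 1$.

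The substantive case is (d). For cyclic defect groups I would invoke the Brauer tree theorem: $kGb$ is Morita equivalent to a Brauer tree algebra, which admits an explicit path-algebra presentation with integer (hence $\mathbb{F}_\ell$) structure constants and therefore has an $\mathbb{F}_\ell$-form; the inequality chain $mf(b) = mf(b_0) \leq frob(b_0) = 1$ from \S\ref{subsec:kalgs}, applied to a basic algebra $b_0$ of $kGb$, yields $mf(b) = 1$. For dihedral defect groups I would appeal to Erdmann's classification of blocks of tame representation type and verify that every basic algebra appearing on her list admits an $\mathbb{F}_\ell$-form, i.e.\ that the scalar parameters in the defining relations can be rescaled into the prime field. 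This scalar analysis across Erdmann's families is the main technical obstacle; everything else reduces to the short Galois-conjugation arguments outlined above.
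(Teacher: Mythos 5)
Your proposal follows essentially the same route as the paper: show $\sigma(b)=b$ in cases (a)–(c) (via $\hat\sigma$-invariance of $\tilde b$, of the rational character sum, and via the matrix-algebra/defect-zero observation respectively), then conclude via Lemma~\ref{lem:galconj}; and handle (d) by showing a basic algebra of $kGb$ has an $\mathbb{F}_\ell$-form. The one point you flag as the ``main technical obstacle''---verifying that the scalar parameters in Erdmann's tame-block presentations can be normalised into the prime field---is exactly what the paper disposes of by direct citation to Erdmann, so the two arguments have the same shape, with the paper outsourcing the step you leave open.
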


\begin{proof}
If $b \in \mathbb{Q}G$ then $\sigma(b) = b$ since $\mathbb{Q}$ is stabilized by $\hat\sigma$. Therefore $kGb^{(\ell)} \cong kGb$ as $k$-algebras by Lemma~\ref{lem:galconj}, so $frob(b) = 1$ and therefore $mf(b) = 1$. 

Suppose that there exist $\chi_1, \dots , \chi_r \in $ Irr$_K(b)$ for some $r \geq 1$ such that $\left( \chi_1 + \dots + \chi_r \right) (g) \in \mathbb{Q}$ for all $g \in G$. Then $\left( {^{\hat\sigma}{\chi_1}} + \dots + {^{\hat\sigma}{\chi_r}} \right)(g)  = \hat\sigma \left( \chi_1 + \dots + \chi_r \right)(g) = \left( \chi_1 + \dots + \chi_r \right) (g)$ for all $g \in G$. It follows that $\{ {^{\hat\sigma}{\chi_1}} , \dots , {^{\hat\sigma}{\chi_r}}\}$ and $\{\chi_1 , \dots , \chi_r\}$ are equal as sets of irreducible characters, so $\sigma(b) = b$ by Lemma~\ref{lem:sigmahat} (b). Therefore $mf(b) = 1$ following the same argument as in part (a).

 By \cite[Theorem 6.1.1]{Block}, if there exists a $\chi \in $ Irr$_K(b)$ such that $\chi(1)_{\ell} = |G|_{\ell}$, then $kGb$ is a matrix algebra. Therefore $kGb$ has an $\mathbb{F}_{\ell}$-form for any $\ell$, so $mf(b) = 1$, showing part (c). If $b$ has cyclic defect then its basic algebras are Brauer tree algebras, so they are defined over $\mathbb{F}_{\ell}$. If $b$ has dihedral defect then its basic algebras are defined over $\mathbb{F}_2$ \cite{Erd}. Thus if $b$ has cyclic or dihedral defect then the Frobenius number of any basic algebra of $kGb$ is 1, so $mf(b) = 1$.
\end{proof}

\begin{lemma}
\label{lem:groupaut}
Let $b$ be a block of $kG$. Suppose that there exists a group automorphism $\varphi \in $Aut~$(G)$ such that for the induced $k$-algebra isomorphism $\varphi: kG \rightarrow kG$, $\varphi(b) = \sigma(b)$. Then $mf(b) = 1$. 
\end{lemma}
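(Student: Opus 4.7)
The plan is to chain two $k$-algebra isomorphisms to conclude that $kGb \cong kGb^{(\ell)}$, which forces $frob(b)=1$ and hence $mf(b)=1$.

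First, I would observe that any group automorphism $\varphi$ of $G$ extends by $k$-linearity to a $k$-algebra automorphism of $kG$: it respects addition, multiplication, and commutes with scalar multiplication (since it acts trivially on $k$). Consequently, if $\varphi(b) = \sigma(b)$ in $Z(kG)$, then $\varphi$ restricts to a $k$-algebra isomorphism $kGb \xrightarrow{\sim} kG\varphi(b) = kG\sigma(b)$. This is the easy half of the argument.

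Next, I would invoke Lemma~\ref{lem:galconj} of Benson--Kessar to obtain a $k$-algebra isomorphism $kGb^{(\ell)} \cong kG\sigma(b)$. Composing with the isomorphism from the previous step yields
\[
kGb \;\cong\; kG\sigma(b) \;\cong\; kGb^{(\ell)}
\]
as $k$-algebras. By definition this means $frob(kGb) = 1$, and from the chain $1 \le mf(b) \le frob(b)$ recorded in Section~\ref{subsec:kalgs} we conclude $mf(b)=1$.

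There is no real obstacle here; the only subtlety worth stating carefully is that $\varphi$ must be recognized as a $k$-algebra map (not merely a ring map, as was the case for the Galois conjugation $\sigma$), which is exactly why the composition with the Benson--Kessar isomorphism yields an honest $k$-algebra isomorphism $kGb \cong kGb^{(\ell)}$ rather than just a ring isomorphism. This distinction is what makes the hypothesis useful: replacing the ring-only action of $\sigma$ with a $k$-linear witness $\varphi$ on the same block is exactly what is needed to collapse the Frobenius twist.
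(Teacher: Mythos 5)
Your proof is correct and takes essentially the same route as the paper: restrict $\varphi$ to a $k$-algebra isomorphism $kGb \cong kG\sigma(b)$, compose with the Benson--Kessar isomorphism $kG\sigma(b) \cong kGb^{(\ell)}$, and conclude $frob(b)=1$, hence $mf(b)=1$. The extra remark that $\varphi$ is $k$-linear (unlike $\sigma$) is a nice clarification but not a departure from the paper's argument.
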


\begin{proof}
Since $\varphi|_{kGb}: kGb \rightarrow kG\sigma(b)$ is a $k$-algebra isomorphism, $kGb~\cong~kG\sigma(b)$. Therefore $kGb~\cong~kGb^{(\ell)}$ as $k$-algebras by Lemma~\ref{lem:galconj}, so $frob(b) = 1$, whence $mf(b) = 1$. 
\end{proof}

\begin{lemma}
\label{lem:twistedalg}
Let $G$ be a finite group such that $H^2(G, k^{\times}) \cong C_2$ and let $\gamma \in H^2(G, k^{\times})$. Then $mf(k_{\gamma}G)=1$. 
\end{lemma}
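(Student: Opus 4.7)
The plan is to show directly that $frob(k_\gamma G)=1$ by identifying the Frobenius twist of $k_\gamma G$ with $k_\gamma G$ itself as a $k$-algebra. Fix a $2$-cocycle $\alpha:G\times G\to k^{\times}$ with cohomology class $\gamma$, and let $\{\bar g:g\in G\}$ denote the standard basis of $k_\gamma G$, with multiplication $\bar g\,\bar h=\alpha(g,h)\,\overline{gh}$.

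First I would unpack what the Frobenius twist does to a twisted group algebra. The ring structure on $(k_\gamma G)^{(\ell)}$ is unchanged, while the $k$-action is rescaled via $\lambda\cdot x=\lambda^{1/\ell}x$. Rewriting each structure constant in the new action gives
\[
\bar g\,\bar h\;=\;\alpha(g,h)\,\overline{gh}\;=\;\alpha(g,h)^{\ell}\cdot_{\mathrm{new}}\overline{gh},
\]
so the identity on the basis $\{\bar g\}$ exhibits $(k_\gamma G)^{(\ell)}$ as the twisted group algebra $k_{\alpha^{\ell}}G$, whose cohomology class is $\gamma^{\ell}\in H^2(G,k^{\times})$.

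Next I would observe that $\ell$ must be odd. Since $k=\overline{\mathbb{F}}_\ell$, the $\ell$-th power map $x\mapsto x^{\ell}$ is an automorphism of $k^{\times}$: it is surjective because $k$ is algebraically closed, and injective because $x^{\ell}-1=(x-1)^{\ell}$ in characteristic $\ell$. The induced map on $H^2(G,k^{\times})$ is therefore an automorphism, yet it coincides with multiplication by $\ell$ by functoriality in the coefficients. Hence $H^2(G,k^{\times})$ has trivial $\ell$-torsion, and the hypothesis $H^2(G,k^{\times})\cong C_2$ forces $\ell\neq 2$.

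Finally, since $\gamma^2=1$ in $C_2$ and $\ell$ is odd, $\gamma^{\ell}=\gamma$. Thus $\alpha^{\ell}$ is cohomologous to $\alpha$, yielding a $k$-algebra isomorphism $k_{\alpha^{\ell}}G\cong k_\gamma G$. Combining this with the identification of the Frobenius twist gives $(k_\gamma G)^{(\ell)}\cong k_\gamma G$, so $frob(k_\gamma G)=1$, whence $mf(k_\gamma G)=1$. The main fiddly step is the first one: verifying that twisting the $k$-action by $\lambda\mapsto\lambda^{1/\ell}$ is equivalent to replacing the cocycle $\alpha$ by $\alpha^{\ell}$. Once that bookkeeping is done, the remainder of the argument is an immediate calculation in $C_2$.
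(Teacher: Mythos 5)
Your proof is correct and follows essentially the same strategy as the paper: both recognize that the Frobenius twist of $k_\gamma G$ has cohomology class $\gamma^\ell$, and both use $H^2(G,k^\times)\cong C_2$ to conclude that this class agrees with $\gamma$, hence $frob(k_\gamma G)=1$. The bookkeeping differs slightly — the paper defines a homomorphism $\sigma$ on $H^2(G,k^\times)$ by applying the Frobenius to cocycles, observes that it preserves the unique non-trivial class, and then writes out an explicit $k$-algebra isomorphism $k_{\sigma(\gamma)}G\to (k_\gamma G)^{(\ell)}$, whereas you identify $(k_\gamma G)^{(\ell)}$ with $k_{\alpha^\ell}G$ directly by rewriting structure constants, and then compute $\gamma^\ell=\gamma$ in $C_2$. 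Your version makes explicit a fact the paper leaves implicit: since the $\ell$-th power map is an automorphism of $k^\times$ (as $k$ is algebraically closed of characteristic $\ell$), multiplication by $\ell$ is an automorphism of $H^2(G,k^\times)$, so the hypothesis $H^2(G,k^\times)\cong C_2$ forces $\ell\neq 2$, which is what makes $\gamma^\ell=\gamma$ hold. That observation is precisely what justifies the paper's assertion that $\sigma(\gamma)$ is non-trivial whenever $\gamma$ is, so the two arguments are really the same reasoning dressed differently; yours is arguably a bit cleaner in isolating that cohomological fact.
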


\begin{proof}
Define a map $\sigma:~H^2(G, k^{\times}) \rightarrow H^2(G, k^{\times})$ as follows. Let $\gamma \in H^2(G, k^{\times})$ and let $\tilde{\gamma}$ be a 2-cocycle representing $\gamma $. Then $\sigma(\gamma)$ is defined to be the class in $H^2(G, k^{\times})$ represented by the 2-cocycle given by
\[ (g, h) \mapsto \sigma(\tilde{\gamma}(g,h)),\]
for all $g, h \in G$. It is easy to check that $\sigma$ is a well-defined group homomorphism on $H^2(G, k^{\times}) $. If $\gamma$ is non-trivial then so is $\sigma(\gamma)$, so since $H^2(G, k^{\times}) \cong C_2$, $k_{\gamma}G \cong k_{\sigma(\gamma)}G$ as $k$-algebras. 

Recall that $k_{\gamma}G^{(\ell)} \cong k_{\gamma}G$ as rings but not necessarily as $k$-algebras, and that scalar multiplication in $k_{\gamma}G^{(\ell)}$ is given by $\lambda.x = \lambda^{\frac{1}{\ell}}x$ for all $\lambda \in k, x \in k_{\gamma}G$. Let $\varphi: k_{\sigma(\gamma)}G \rightarrow  k_{\gamma}G^{(\ell)}$ be the map defined by 
\[ \varphi \left( \sum_{g \in G} \alpha_g g\right) = \sum_{g \in G} \alpha_g^{\frac{1}{\ell}} g\] 
for all $ \sum_{g \in G} \alpha_g g \in  k_{\sigma(\gamma)}G$. This is a ring isomorphism, and
\[\varphi \left(\lambda \sum_{g \in G} \alpha_g g\right) =
\sum_{g \in G} \left( \lambda \alpha_g \right)^{\frac{1}{\ell}} g = 
\lambda^{\frac{1}{\ell}} \sum_{g \in G}  \alpha_g^{\frac{1}{\ell}} g  = 
\lambda . \varphi \left( \sum_{g \in G} \alpha_g g \right)\] 
for all $\lambda \in k$ and $ \sum_{g \in G} \alpha_g g \in  k_{\sigma(\gamma)}G$, so $\varphi$ is in fact an isomorphism of $k$-algebras. Therefore $k_{\gamma}G\cong~ k_{\sigma(\gamma)}G \cong k_{\gamma}G^{(\ell)}$ as $k$-algebras, so $frob(k_{\gamma}G)=1$, hence $mf(k_{\gamma}G) = 1$.
\end{proof}

\subsection{Dominating blocks.}
\label{subsec:domblocks}

Let $G$ be a finite group with normal subgroup $Z$. Let $\closure{G} = G/Z$ and let $\mu: G \rightarrow \closure{G}$ be the natural quotient map. Denote also by $\mu:  kG \rightarrow k\closure{G}$ the induced $k$-algebra homomorphism given by
\[ \mu \left(\sum_{g \in G} \alpha_g g \right) = \sum_{g \in G} \alpha_g \mu(g) \]
for all $\sum_{g \in G} \alpha_g g \in kG$. If $b$ is a block of $kG$, then $\mu(b) = \closure{b}_1 + \dots \closure{b}_r$ for some $r \geq 0$, where  $\closure{b}_i$ are block idempotents of $k\closure{G}$. Recall that if $r \neq 0$, then $b$ is said to \textit{dominate} the blocks $\closure{b}_i$ of $k\closure{G}$, for $1 \leq i \leq r$, and each block $\closure{b}$ of $k\closure{G}$ is dominated by a unique block of $kG$. By identifying $\chi \in $ Irr$_K(\closure{G})$ with $\chi \circ \mu \in $ Irr$_K(G)$, we can consider Irr$_K(\closure{G})$ as a subset of Irr$_K(G)$. See \cite[Ch. 5, Section 8.2]{N/T} for more details. 

\begin{lemma}
\label{lem:domblocks}
Let $b$ be a block of $kG$.
\begin{itemize}
	\item[(a)] $b$ dominates some block of $k\closure{G}$ if and only if $b$ covers the principal block of $kZ$
	\item[(b)] $b$ dominates a block $\overline{b}$ of $k\closure{G}$ if and only if $\sigma(b)$ dominates $\sigma(\closure{b})$
	\item[(c)] If $Z \leq Z(G)$ and $b$ dominates some block of $k\closure{G}$, then $b$ dominates a unique block of $k\closure{G}$
	\item[(d)] If $Z$ is an $\ell'$-group (not necessarily central) and $b$ dominates some block of $k\closure{G}$, then $b$ dominates a unique block $\closure{b}$ of $k\closure{G}$ and $kGb \cong k{\closure{G}}{\closure{b}}$ as $k$-algebras
\end{itemize}
\end{lemma}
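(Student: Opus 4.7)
The plan is to introduce the principal block idempotent $e_0 \in kZ$, namely the unique block idempotent of $kZ$ mapping to $1$ under the augmentation $\varepsilon : kZ \to k$, and to exploit three easy observations. First, $\ker(\mu) = kG \cdot I(Z)$ where $I(Z) = \ker(\varepsilon)$. Second, since $Z$ is normal in $G$ the conjugation action of $G$ permutes the elements of $Z$ and hence fixes $e_0$, so $e_0 \in Z(kG)$ with no assumption on $Z$ beyond normality. Third, $\varepsilon$ kills $kZ(1-e_0)$, which gives $\mu(e_0) = 1$ in $k\closure{G}$.

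Given these, parts (a) and (b) are immediate. For (a), $\mu(e_0)=1$ gives $\mu(b) = \mu(be_0)$, so $\mu(b)\neq 0$ iff $be_0 \neq 0$; since $e_0 \in Z(kG)$ and $b$ is primitive in $Z(kG)$, this is equivalent to $be_0 = b$, i.e.\ to $b$ covering the principal block of $kZ$. For (b), $\mu$ and $\sigma$ commute (they act on group elements and scalar coefficients respectively), so $\mu(\sigma(b)) = \sigma(\mu(b)) = \sigma(\closure{b}_1) + \dots + \sigma(\closure{b}_r)$, a decomposition of $\mu(\sigma(b))$ into primitive central idempotents of $k\closure{G}$; hence $\sigma(b)$ dominates exactly the $\sigma(\closure{b}_i)$.

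For part (c), the hypothesis $Z \leq Z(G)$ forces $kZ \subseteq Z(kG)$, and in particular $I(Z)e_0 \subseteq Z(kG)$. The argument is to show that $\ker(\mu|_{kGe_0}) = kGe_0 \cdot I(Z)e_0$ is nilpotent: $I(Z)e_0$ is the unique maximal ideal of the finite-dimensional local algebra $kZe_0$ and is therefore nilpotent, and centrality of $I(Z)e_0$ lets this nilpotence propagate to the two-sided ideal it generates in $kGe_0$. Consequently $\mu : kGe_0 \twoheadrightarrow k\closure{G}$ is a surjection with nilpotent kernel, so central idempotents lift uniquely along $\mu$ and primitivity is preserved in both directions. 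Since $b = be_0$ by (a), $b$ is a primitive central idempotent of $kGe_0$, hence $\mu(b)$ is a primitive central idempotent of $k\closure{G}$, i.e.\ a single block.

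For part (d), the $\ell'$-hypothesis on $Z$ makes $kZ$ semisimple and forces $I(Z) = kZ(1-e_0)$, so $\ker(\mu|_{kGe_0}) = kGe_0 \cap kG(1-e_0) = 0$ and $\mu|_{kGe_0} : kGe_0 \to k\closure{G}$ is an isomorphism of $k$-algebras; uniqueness of the dominated block and the isomorphism $kGb \cong k\closure{G}\closure{b}$ then follow by restricting this isomorphism to $kGb = kGe_0 \cdot b$. I expect the main obstacle to be the nilpotence argument in (c); once $e_0$ is identified as a central idempotent of $kG$ with $\mu(e_0) = 1$, everything else is essentially bookkeeping with idempotents.
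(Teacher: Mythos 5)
Your overall strategy of working directly with the principal block idempotent $e_0$ of $kZ$ is sound, and parts (b) and (d) are correct and arguably cleaner than the paper's route: the paper deduces (b) from the character-theoretic characterisation of domination (N/T Ch.~5 Lemma~8.6(ii)) together with Lemma~\ref{lem:sigmahat}, and proves (d) by citing N/T Ch.~5 Theorem~8.8 for uniqueness and then comparing $\dim_k kGb$ with $\dim_k k\closure{G}\closure{b}$ via $\mathrm{Irr}_K(b)=\mathrm{Irr}_K(\closure{b})$; your direct identification $\ker\mu = kG(1-e_0)$ and the resulting isomorphism $\mu|_{kGe_0}\colon kGe_0\xrightarrow{\ \sim\ }k\closure{G}$ accomplishes both at once without characters.

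There are, however, two gaps. The smaller one is in (a): from $\mu(b)=\mu(be_0)$ you conclude ``so $\mu(b)\neq 0$ iff $be_0\neq 0$,'' but this is a non sequitur --- $\mu(b)=\mu(be_0)$ only tells you $\mu(b)=0$ iff $\mu(be_0)=0$, and $\mu(be_0)=0$ means $be_0\in\ker\mu$, not $be_0=0$. You need the nilpotence observation here: if $be_0=b$ and $\mu(b)=0$, then $b\in\ker(\mu|_{kGe_0})=kGe_0\,I(Z)e_0$, which is a nilpotent ideal (using that $I(Z)e_0$ is $G$-stable, so $(kG\cdot I(Z)e_0)^n\subseteq kG\cdot(I(Z)e_0)^n$), and a nilpotent ideal contains no nonzero idempotent. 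This is fixable since you establish the nilpotence anyway in (c), but as written (a) is incomplete.

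The serious gap is in (c). You assert that since $\mu\colon kGe_0\twoheadrightarrow k\closure{G}$ has nilpotent kernel, ``central idempotents lift uniquely along $\mu$ and primitivity is preserved in both directions.'' This is false in general for surjections of finite-dimensional algebras with nilpotent kernel. For instance, take $A$ to be the algebra of upper-triangular $2\times 2$ matrices over $k$ and $N=J(A)$; then $A$ has exactly one block (the identity is the unique primitive central idempotent), while $A/N\cong k\times k$ has two, so the image of the primitive central idempotent $1\in Z(A)$ is $(1,1)$, which is not primitive, and the central idempotent $(1,0)$ of $A/N$ lifts to no central idempotent of $A$ at all. The point is that lifting idempotents modulo a nilpotent ideal does not respect centrality, and $Z(A)\to Z(A/N)$ need not be surjective. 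Thus ``$\ker(\mu|_{kGe_0})$ nilpotent'' by itself does not yield that $\mu(b)$ is primitive in $Z(k\closure{G})$. This is precisely the nontrivial content of N/T Ch.~5 Theorem~8.11, which the paper cites for (c); to make your argument self-contained you would need an additional input specific to the group-algebra/central-subgroup situation (for example, a direct argument that $Z(kGe_0)\to Z(k\closure{G})$ is surjective, or a successive-approximation argument exploiting that the nilpotent ideal $I(Z)e_0$ lies in $Z(kGe_0)$, neither of which is immediate). As it stands, (c) is not proved.
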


\begin{proof}
Part (a) follows directly from \cite[Ch. 5 Lemma 8.6 (i)]{N/T}. For part (b), note that by \cite[Ch. 5, Lemma 8.6 (ii)]{N/T}, $b$ dominates $\closure{b}$ if and only if Irr$_K(\overline{b}) \subseteq$ Irr$_K(b)$, where we identify characters in Irr$_K(\closure{G})$ with characters in Irr$_K(G)$ as discussed above. Irr$_K(\overline{b}) \subseteq$ Irr$_K(b)$  if and only if we have the following. 
\[ \mbox{Irr}_K\left(\sigma\left(\closure{b}\right)\right) = 
\{ {^{\hat{\sigma}}\chi}~|~ \chi \in \mbox{Irr}_K\left(\closure{b}\right)\} \subseteq 
 \{ {^{\hat{\sigma}}\chi}~ |~\chi \in \mbox{Irr}_K(b)\}  = 
\mbox{ Irr}_K(\sigma(b))
\]
Therefore $b$ dominates $\closure{b}$ if and only if $\sigma(b)$ dominates $\sigma\left(\closure{b}\right)$.  

Part (c) follows from \cite[Ch. 5 Theorem 8.11]{N/T}. Finally for part (d), suppose that $Z$ is an $\ell'$-subgroup of $G$ and that $b$ dominates a block $\closure{b}$ of $k\closure{G}$. Then by \cite[Ch. 5, Theorem 8.8]{N/T}, $\closure{b}$ is the unique block of $k\closure{G}$ dominated by $b$, and Irr$_K(b)$ = Irr$_K\left(\closure{b}\right)$. Therefore $\mu(b) = \closure{b}$, so $\mu:kG \rightarrow k\closure{G}$ restricts to another surjection $\closure{\mu}: kGb \rightarrow k\closure{G}\closure{b}$ given by 
\[ \closure{\mu} \left( \left(\sum_{g \in G} \alpha_g g \right) b \right) = \left( \sum_{g \in G} \alpha_g \mu(g) \right) \closure{b} \]
for all $\sum_{g \in G} \alpha_g g \in kG$. It only remains to show that this is an injection. Since Irr$_K(b)$ = Irr$_K\left(\closure{b}\right)$ and rank$_{k}(kGb) = $ dim$_K(KGb)$,
\[
\mbox{rank}_{k}(kGb) 
=  \sum_{\chi \in \mbox{\footnotesize{Irr}}_K(b)} \chi(1)^2
=  \sum_{\chi \in \mbox{\footnotesize{Irr}}_K\left(\closure{b}\right)} \chi(1)^2  
= \mbox{ rank}_{k}(k\closure{G}\closure{b}),
\]
so $\closure{\mu}: kGb \rightarrow k\closure{G}\closure{b}$ is a $k$-algebra isomorphism as required for part (d). 
\end{proof}

%%%%%%%%%%%%%%%%%%%%%%%%%%%%%%%%%%%%%%%%%%%%%%%%%%%%%%%%
%%%%%%%% ALTERNATING GROUPS %%%%%%%%%%%%%%%%%%%%%%%%%%%%%%%%%%%%%
%%%%%%%%%%%%%%%%%%%%%%%%%%%%%%%%%%%%%%%%%%%%%%%%%%%%%%%%

\section{THE ALTERNATING GROUPS}
\label{sec:AltGps}

\begin{theorem}
\label{thrm:alt}
Let $G$ be $S_n$, $A_n$, or a double cover of $S_n$ or $A_n$, and let $b$ be a block of $kG$. Then $mf(b) = 1$.
\end{theorem}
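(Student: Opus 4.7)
The plan is to reduce to the general criteria of Section~\ref{sec:grouptheory} in all four classes of groups, exploiting the character theory and outer automorphism structure of $S_n$ and $A_n$. The recurring strategy will be: for each block $b$, either $\mathrm{Irr}_K(b)$ is closed under $\hat\sigma$ so that Proposition~\ref{lem:mainlemma}(a) or~(b) gives $\sigma(b)=b$ and hence $mf(b)=1$; or $\sigma(b)\neq b$, in which case I produce a group automorphism $\varphi$ with $\varphi(b)=\sigma(b)$ and apply Lemma~\ref{lem:groupaut}.

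For $G=S_n$, every irreducible character is $\mathbb{Q}$-valued, so $\tilde b=\sum_{\chi\in \mathrm{Irr}_K(b)}e_\chi$ lies in $\mathbb{Q}S_n$ and Proposition~\ref{lem:mainlemma}(a) applies. For $G=A_n$, the irreducible characters are either $\mathbb{Q}$-valued restrictions of $S_n$-characters or occur in irrational pairs $\chi^+,\chi^-$ coming from the splitting of $\mathrm{Res}^{S_n}_{A_n}\chi$ at a self-conjugate partition; such a pair is a $\varphi$-orbit for the outer automorphism $\varphi$ of $A_n$ induced by any transposition of $S_n$, and is either a $\hat\sigma$-orbit or $\hat\sigma$-fixed pointwise depending on how $\hat\sigma$ acts on the associated quadratic irrationality. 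For any block $b$ of $kA_n$: if $\mathrm{Irr}_K(b)$ is $\hat\sigma$-stable, the sum of its characters is $\mathbb{Q}$-valued and Proposition~\ref{lem:mainlemma}(b) applies; otherwise $b$ and $\sigma(b)$ are the two $A_n$-blocks covering a single $S_n$-block, hence are also swapped by $\varphi$, so $\varphi(b)=\sigma(b)$ and Lemma~\ref{lem:groupaut} applies.

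For $G$ a double cover of $S_n$ or $A_n$, let $Z=\langle z\rangle\cong C_2$ be the central kernel of the projection onto $\closure{G}$. If $\ell=2$ then $(z-1)^2=0$ in $kG$, so $(z-1)kG$ is a central nilpotent ideal and the surjection $kG\twoheadrightarrow k\closure{G}$ induces a $\sigma$-equivariant bijection between the block sets; the two previous cases then transfer verbatim, using in the swap subcase an outer automorphism lifted to the double cover from the corresponding cover of $S_n$. If $\ell$ is odd, then $Z$ is an $\ell'$-subgroup and $kG\cong k\closure{G}\oplus k_\gamma \closure{G}$, where $\gamma\in H^2(\closure{G},k^\times)\cong C_2$ is the class of the extension. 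Non-spin blocks live in the first summand and are $k$-algebra isomorphic to blocks of $k\closure{G}$ by Lemma~\ref{lem:domblocks}(d), so are already handled. Spin blocks live in $k_\gamma\closure{G}$, where I plan to run the same rationality / outer-automorphism dichotomy as in the $A_n$ step, using the known parameterization of spin characters by strict partitions together with the associate/self-associate behaviour under $\hat\sigma$.

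The hardest part will be the spin-block case for $\ell$ odd: Lemma~\ref{lem:twistedalg} only shows that $k_\gamma\closure{G}$ as a whole has $mf=1$, and this does not immediately descend to individual spin blocks. Completing this step will therefore require either a block-level refinement of Lemma~\ref{lem:twistedalg}, or an explicit combinatorial analysis of spin character values (which lie in small cyclotomic fields) together with the outer automorphism of the double cover of $A_n$ coming from the cover of $S_n$, so as to ensure the $\hat\sigma$-stable versus $\varphi$-swap alternative holds inside $k_\gamma\closure{G}$ just as it did inside $kA_n$.
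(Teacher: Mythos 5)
Your handling of $S_n$, of $A_n$, and of the double covers when $\ell=2$ is sound and largely parallels the paper, with one genuinely different twist: in the $A_n$ case, when the two conjugate characters $\chi_\lambda^1,\chi_\lambda^2$ end up in distinct blocks, you invoke the outer automorphism of $A_n$ via Lemma~\ref{lem:groupaut}, whereas the paper uses the fact (from \cite[Theorem 6.1.46]{J/K}) that this only happens when both blocks have defect zero and then appeals to Proposition~\ref{lem:mainlemma}(d). Both routes are valid; the defect-zero route is more elementary (you avoid having to check that the Galois swap and the $S_n$-conjugation swap agree), while yours is conceptually uniform with the rest of the argument. Similarly, your remark that $(z-1)$ is a central nilpotent at $\ell=2$ is exactly why the paper's block bijection from \cite[Ch.~5, Theorem~8.11]{N/T} works.

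The genuine gap is the one you yourself flag: the spin blocks of $\widetilde{S}_n$ and $\widetilde{A}_n$ for odd $\ell$. You are right that Lemma~\ref{lem:twistedalg} gives $mf(k_\gamma \closure{G})=1$ for the whole twisted group algebra and does not descend to individual blocks, so that route dead-ends as stated. The paper resolves the case by the ``explicit combinatorial analysis'' you gesture toward, and it really is necessary to supply the following specific facts: (i) spin characters are parametrized by strict partitions $\lambda$; (ii) when $\epsilon(\lambda)=0$ there is a single spin character $\chi_\lambda$ and it is $\mathbb{Q}$-valued by Morris's analogue of the Murnaghan--Nakayama rule \cite{Mo}; (iii) when $\epsilon(\lambda)=1$ there are two associate characters with $\chi_\lambda(g)=-\chi_\lambda^a(g)$ by Cartier \cite[Theorems A and B]{Ca}, so their sum is rational whenever they lie in the same block; and (iv) they lie in distinct blocks exactly when $\lambda$ equals its $\ell$-bar core, in which case both blocks have defect zero. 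Without (ii)--(iv) your proposed dichotomy (``either $\hat\sigma$-stable or swapped by $\varphi$'') is not established, because a priori $\hat\sigma$ could act nontrivially on a spin character without sending it to an associate inside the same block. You also do not address the $\widetilde{A}_n$ spin case for odd $\ell$ in detail: the paper needs Kessar's result \cite[Proposition 3.16(i)]{K2} that a nontrivial-defect block of $k\widetilde{S}_n$ covers a unique block of $k\widetilde{A}_n$ and in fact equals it as an idempotent, so the $\widetilde{S}_n$ analysis transfers directly; your proposal would need an analogous reduction to make the ``same dichotomy as in the $A_n$ step'' work one level up.
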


\begin{proof}
The irreducible characters of $S_n$ are rational valued so the result follows immediately for blocks of $kS_n$ by Proposition~\ref{lem:mainlemma} (b). The irreducible characters of $A_n$ arise as restrictions of irreducible characters of $S_n$, which are parametrized by the partitions $\lambda$ of $n$. Suppose $b$ is the block of $kS_n$ containing the irreducible character $\chi_{\lambda}$ associated with a partition $\lambda$. By \cite[Lemma 12.1]{O2}, if $\lambda$ is symmetric then $\chi_{\lambda}|_{A_n}$ is an irreducible character of $A_n$, so $\chi_{\lambda}|_{A_n}$ is a rational valued character of $A_n$. If $\lambda$ is not symmetric then $\chi_{\lambda}|_{A_n} = \chi_{\lambda}^1 + \chi_{\lambda}^2$ is the sum of two irreducible conjugate characters of $A_n$, and these may not be rational valued. By \cite[Proposition 12.2]{O2}, if $b$ has non-trivial defect, then $\chi_{\lambda}^1$ and $\chi_{\lambda}^2$ appear in the same block of $kA_n$, and we note that their sum is rational valued. If $b$ has trivial defect then $\chi_{\lambda}^1$ and $\chi_{\lambda}^2$ are in separate blocks of $kA_n$ \cite[Theorem 6.1.46]{J/K}, each of defect zero. Therefore, any block of $kA_n$ satisfies the hypothesis of at least one of parts (a), (b) and (d) of Proposition~\ref{lem:mainlemma}, and therefore, the Morita Frobenius number of all blocks of $kA_n$ is 1.

Let $\widetilde{S}_n$ denote a double cover of the symmetric group.  When $\ell$ is odd, $S_n$ is a quotient of $\widetilde{S}_n$ by a central $\ell'$-subgroup, so by \cite[Ch. 5 Theorem 8.8]{N/T} $k\widetilde{S}_n$ has two types of blocks -- blocks which dominate unique blocks of $kS_n$, and blocks which do not dominate any block of $kS_n$. First, suppose $c$ is a block of $k\widetilde{S}_n$ which dominates a block $b$ of $kS_n$. Then $k\widetilde{S_n}c \cong kS_nb$ as $k$-algebras by Lemma~\ref{lem:domblocks} (d), so $mf(c) = mf(b) = 1$. 

Now suppose $c$ is a block of $k\widetilde{S}_n$ which does not dominate a block of $kS_n$. Then $c$ contains only spin characters and these are parametrized by the strict partitions of $n$ -- partitions of $n$ which have no repeated parts. The \emph{parity} of a partition is 
\begin{equation*}
  \epsilon(\lambda)=\begin{cases}
    				0 & \text{if ($n$ minus the number of parts in $\lambda$) is even},\\
    				1 & \text{otherwise}.
  			\end{cases}
\end{equation*}

If $\epsilon(\lambda)= 0$ then $\lambda$ has one associated spin character, $\chi_{\lambda}$. Then $\chi_{\lambda}(g) \neq 0$ only if $g$ has cycle type with all odd parts, and the character values can be calculated using an analogue of the Murnaghan Nakayama formula \cite{Mo}. In particular, $\chi_{\lambda}(g) \in \mathbb{Q}$ for all $g \in G$. If $\epsilon(\lambda)= 1$ then $\lambda$ has two associated spin characters, $\chi_{\lambda}$ and its \emph{associate} $\chi_{\lambda}^{a}$ and there are two possibilities to consider. Firstly, if $\lambda$ is equal to its $\ell$-bar core (see \cite[Definition 5]{Ca}) then $\chi_{\lambda}$ and $\chi_{\lambda}^{a}$ lie in $\ell$-blocks of defect zero. Secondly, if $\lambda$ is not equal to its $\ell$-bar core, then $\chi_{\lambda}$ and $\chi_{\lambda}^{a}$ appear in the same block and $\chi_{\lambda}(g) = - \chi_{\lambda}^{a}(g)$ for all $g \in \widetilde{S}_n$ \cite[Theorems A and B]{Ca}. Therefore $\left( \chi_{\lambda} + \chi_{\lambda}^{a} \right) (g)  \in \mathbb{Q}$ for all $g \in G$. Thus the result follows for all blocks $c$ of $k\widetilde{S}_n$ when $\ell$ is odd by Proposition~\ref{lem:mainlemma} (a), (b) and (d). 

When $\ell=2$, the 2-blocks of $k\widetilde{S}_n$ are in one-to-one correspondence with the 2-blocks of $kS_n$ \cite[Ch. 5, Theorem 8.11]{N/T}, so each block of $k\widetilde{S}_n$ contains at least one rational valued character of $S_n$. The result therefore follows for all 2-blocks of $k\widetilde{S}_n$ by Proposition~\ref{lem:mainlemma} (b).

Finally, let $\widetilde{A}_n$ denote a double cover of $A_n$. Suppose $d$ is a block of $k\widetilde{A}_n$ covered by a block $c$ of $k\widetilde{S}_n$. If $c$ has non-trivial defect, then by \cite[Proposition 3.16 (i)]{K2}, $d = c$ so $k\widetilde{A}_n d = k\widetilde{A}_n c$. By the arguments for $k\widetilde{S}_n$ above, $k\widetilde{S}_n c$ satisfies at least one of the hypotheses of parts (a), (b) and (d) of Proposition~\ref{lem:mainlemma} and therefore so does $k\widetilde{A}_n c$. It follows that $mf(k\widetilde{A}_n d) = mf(k\widetilde{A}_n c) = 1$. Now suppose that $c$ has trivial defect. Then $d$ also has trivial defect so $mf(d) = 1$ by Proposition~\ref{lem:mainlemma} (d).
\end{proof}

%%%%%%%%%%%%%%%%%%%%%%%%%%%%%%%%%%%%%%%%%%%%%%%%%%%%%%%%
%%%%%%%% DEFINING CHAR %%%%%%%%%%%%%%%%%%%%%%%%%%%%%%%%%%%%%%%%%
%%%%%%%%%%%%%%%%%%%%%%%%%%%%%%%%%%%%%%%%%%%%%%%%%%%%%%%%
%\pagebreak
\section{FINITE GROUPS OF LIE TYPE IN DEFINING CHARACTERISTIC}
\label{sec:definingchar}

\begin{theorem}
\label{lem:autgalconj}
Let $\textbf{G}$ be a simple, simply-connected algebraic group defined over an algebraic closure of the field of $\ell$ elements. Let $q$ be a power of $\ell$ and let $F: \textbf{G} \rightarrow \textbf{G}$ be a Steinberg morphism with respect to an $\mathbb{F}_q$-structure with finite group of fixed points, $\textbf{G}^F$. Let $b$ be a block of of $k\textbf{G}^F$ with Galois conjugate $\sigma(b)$. Then there exists a group automorphism $\varphi:\textbf{G}^F \rightarrow \textbf{G}^F$ such that for the induced $k$-algebra isomorphism $\varphi: k\textbf{G}^F \rightarrow k\textbf{G}^F$, $\varphi(b) = \sigma(b)$.
\end{theorem}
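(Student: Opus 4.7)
The plan is to take $\varphi$ to be the restriction to $\textbf{G}^F$ of the inverse of the standard $\ell$-Frobenius $F_0\colon \textbf{G} \to \textbf{G}$ coming from the $\mathbb{F}_\ell$-structure of $\textbf{G}$. First I check this restriction is well-defined: every Steinberg morphism $F$ commutes with $F_0$ (in the Suzuki/Ree case $F$ is an odd power of a special isogeny $\tau$ with $\tau^2=F_0$; in the untwisted and twisted cases $F = F_0^a \tau$ with $\tau$ a graph automorphism commuting with $F_0$). Hence $F_0(\textbf{G}^F) \subseteq \textbf{G}^F$, and since $F_0$ is injective on $\textbf{G}(\bar{\mathbb{F}}_\ell)$, its restriction to the finite group $\textbf{G}^F$ is a group automorphism. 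I then set $\varphi := (F_0|_{\textbf{G}^F})^{-1}$.

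To verify $\varphi(b) = \sigma(b)$, I translate into characters. A direct computation with central idempotents, analogous to the one in the proof of Lemma~\ref{lem:sigmahat}, gives $\varphi(e_\chi) = e_{\chi \circ F_0}$ for every $\chi \in \mbox{Irr}_K(\textbf{G}^F)$, so $\mbox{Irr}_K(\varphi(b)) = \{\chi \circ F_0 : \chi \in \mbox{Irr}_K(b)\}$. Combining this with Lemma~\ref{lem:sigmahat}(b), which gives $\mbox{Irr}_K(\sigma(b)) = \{{^{\hat\sigma}\chi} : \chi \in \mbox{Irr}_K(b)\}$, and using that the two sets have the same cardinality, it suffices to show that $\chi \circ F_0$ and ${^{\hat\sigma}\chi}$ lie in the same $\ell$-block of $\textbf{G}^F$ for every $\chi \in \mbox{Irr}_K(b)$.

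Since blocks are determined by central characters on $\ell$-regular class sums modulo $\mathfrak{m}$, this further reduces to the identity $\chi(F_0(g)) = \hat\sigma(\chi(g))$ for every $\ell$-regular $g \in \textbf{G}^F$. In defining characteristic, $\ell$-regular elements of $\textbf{G}^F$ are precisely the semisimple elements. The key intermediate claim I will prove is that for any semisimple $g \in \textbf{G}^F$, the elements $F_0(g)$ and $g^\ell$ are conjugate in $\textbf{G}^F$. Granting this, $\chi(F_0(g)) = \chi(g^\ell) = \hat\sigma(\chi(g))$, the last being the classical Galois--power identity for ordinary characters evaluated on $\ell'$-elements.

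The main obstacle is the $\textbf{G}^F$-conjugacy of $F_0(g)$ and $g^\ell$. Both are semisimple elements of $\textbf{G}(\bar{\mathbb{F}}_\ell)$ whose eigenvalues are the $\ell$-th powers of the eigenvalues of $g$, and so are conjugate in $\textbf{G}(\bar{\mathbb{F}}_\ell)$ by some $y$: $y F_0(g) y^{-1} = g^\ell$. Applying $F$ to this relation, and using $g, g^\ell \in \textbf{G}^F$ together with $F \circ F_0 = F_0 \circ F$, shows $y^{-1} F(y) \in C_{\textbf{G}}(F_0(g))$. By Steinberg's theorem on centralizers of semisimple elements, since $\textbf{G}$ is simply connected semisimple, $C_{\textbf{G}}(F_0(g))$ is connected---this is precisely where simple-connectedness is indispensable. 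The Lang--Steinberg theorem applied to the connected group $C_{\textbf{G}}(F_0(g))$ then yields $z \in C_{\textbf{G}}(F_0(g))$ with $z^{-1} F(z) = y^{-1} F(y)$, and the product $y z^{-1}$ is $F$-fixed and conjugates $F_0(g)$ to $g^\ell$, completing the argument.
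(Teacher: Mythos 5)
Your proof is correct, and it takes a genuinely different route from the paper's. The paper relies on Humphreys' classification of blocks in defining characteristic ($|Z(\textbf{G}^F)|+1$ blocks, parametrized through the characters of the centre, plus the Steinberg block) and then proves, by a type-by-type case analysis on $SL_n$, $SU_n$, $\mathrm{Spin}$, $E_6$, ${}^2E_6$ and so on, that the $\mathbb{F}_\ell$-split Frobenius $F_\ell$ (your $F_0$) acts on $Z(\textbf{G}^F)$ as $z \mapsto z^\ell$, and finally sets $\varphi = F_\ell^{\phi(m)-1}$ to permute the central idempotents as $\sigma$ does. You bypass both the block classification and the case analysis by proving the uniform and stronger statement that $F_\ell(g)$ and $g^\ell$ are $\textbf{G}^F$-conjugate for \emph{every} semisimple $g \in \textbf{G}^F$, not just central $g$, via the Lang--Steinberg theorem applied to $C_{\textbf{G}}(F_\ell(g))$, which is connected by Steinberg's theorem precisely because $\textbf{G}$ is simply connected. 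The paper's central-element computation falls out as a special case (a central semisimple element is its own conjugacy class), and your argument even furnishes a single automorphism $\varphi = F_\ell^{-1}$ valid for all blocks at once, whereas the paper tailors $\varphi$ to the block. The trade-off is that you invoke heavier machinery (connectedness of centralizers, Lang--Steinberg, Osima's supports for the reduction to $\ell$-regular classes) where the paper uses explicit and elementary structure. One step worth tightening: asserting that $F_\ell(g)$ and $g^\ell$ are $\textbf{G}$-conjugate because they ``have the same eigenvalues'' is slightly loose for conjugacy inside $\textbf{G}$ rather than $GL_n$; a cleaner formulation is to write $g = xtx^{-1}$ with $t$ in an $\mathbb{F}_\ell$-split maximal torus, note that $F_\ell(t)=t^\ell$ coordinatewise on such a torus, and conclude that $F_\ell(g) = F_\ell(x)\,t^\ell\,F_\ell(x)^{-1}$ and $g^\ell = x\,t^\ell\,x^{-1}$ are both $\textbf{G}$-conjugate to $t^\ell$.
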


\begin{proof}
By \cite[Theorems 8.3, 8.5]{H}, since $\textbf{G}$ is simply-connected and $\ell$ divides $q$, $k\textbf{G}^F$ has $|Z(\textbf{G}^F)| + 1$ blocks; one of trivial defect which contains the Steinberg character, and $|Z(\textbf{G}^F)|$ of full defect. Note that these results also hold for the Suzuki and Ree groups.

First suppose that $Z(\textbf{G}^F) \leq C_2$. Then $k\textbf{G}^F$ has at most three blocks. One of these blocks contains the trivial character and another contains the Steinberg character, so by the proof of Proposition~\ref{lem:mainlemma} (b), all blocks $b$ of $k\textbf{G}^F$ are stabilized by Galois conjugation. We can therefore let $\varphi: \textbf{G}^F \rightarrow \textbf{G}^F$ be the identity map.

Now suppose that $Z(\textbf{G}^F) \cong C_m$ for some $m>2$ coprime to $\ell$. Let $Z(\textbf{G}^F) = \langle g \rangle$. Then $Z(\textbf{G}^F)$ has $m$ irreducible characters $\chi_i :  Z(\textbf{G}^F) \rightarrow  K$, and to each character there is an associated central primitive idempotent $e_i$ of $KZ(\textbf{G}^F)$,
\[ e_i 	 =  \frac{1}{m} \sum_{0 \leq a \leq m-1} \chi_i(g^{a})g^{-a}\]
for $0 \leq i \leq m-1$. Since $m$ is coprime to $\ell$ it is invertible in $\mathcal{O}$, so $e_i \in \mathcal{O}\textbf{G}^F$. Let $\bar{e}_i$ be the image of $e_i$ in $k\textbf{G}^F$ under the canonical quotient mapping $\mathcal{O}\textbf{G}^F \rightarrow k\textbf{G}^F$, 
\[ \bar{e}_i 	 =  \frac{1}{m} \sum_{0 \leq a \leq m-1} \overline{\chi_i(g^{a})} g^{-a}.\]
Then $\bar{e}_i$ is a block of $kZ(\textbf{G}^F)$ and is a central, but not necessarily primitive, idempotent of $k\textbf{G}^F$. Since $k\textbf{G}^F$ has $m+1$ blocks, there are exactly $m+1$ primitive central idempotents in $k\textbf{G}^F$. Clearly, the blocks of $kZ(\textbf{G}^F)$ are $\textbf{G}^F$-stable. Therefore, precisely one $\bar{e}_i$ is imprimitive in $k\textbf{G}^F$. Since the trivial and Steinberg characters of $\textbf{G}^F$ both restrict to the trivial character on $Z(\textbf{G}^F)$, it follows that the principal block of $kZ(\textbf{G}^F)$ is imprimitive in $k\textbf{G}^F$ and splits into the principal and Steinberg blocks of $k\textbf{G}^F$. Galois conjugation stabilizes the principal and Steinberg blocks, as discussed above, so it only remains to consider the $m-1$ blocks of $k\textbf{G}^F$ with block idempotent $\bar{e}_i$.

Galois conjugation acts on $\bar{e}_i$ by
\[ \sigma(\bar{e}_i) = \frac{1}{m} \sum_{0 \leq a \leq m-1} \overline{\chi_i(g^{a})}^{\ell} g^{-a}.\]
The action of $\sigma$ is trivial if $\ell \equiv 1$ mod m, so from now on we assume that $\ell \nequiv 1$ mod $m$. 

Let $F_{\ell}:\textbf{G} \rightarrow \textbf{G}$ be the $\mathbb{F}_{\ell}$-split Steinberg endomorphism given in \cite[Example 22.6]{M/T} which commutes with the Steinberg morphism $F$. Suppose $g \in \textbf{G}^F$. Then $F(F_{\ell}(g)) = F_{\ell}(F(g)) = F_{\ell}(g)$, so $F_{\ell}(g) \in \textbf{G}^F$ for all $g \in \textbf{G}^F$. Since $F_{\ell}$ is injective, it follows that $F_{\ell}(\textbf{G}^F) = \textbf{G}^F$. Therefore, $F_{\ell}$ is an automorphism of $\textbf{G}^F$ and so it restricts to an automorphism of $Z(\textbf{G}^F)$. We claim that $F_{\ell}(z) = z^{\ell}$ for all $z \in Z(\textbf{G}^F)$. 

Suppose that $\textbf{G}^F = SL_n(q)$ or $SU_n(q)$. Then if $(\alpha_{ij}) \in Z(\textbf{G}^F$), $F_{\ell}(\alpha_{ij}) = (\alpha_{ij}^{\ell}) = (\alpha_{ij})^{\ell}$. Next, suppose that $\textbf{G}^F =$ Spin$_{2n}^{+}(q)$ with $n \geq 5$ odd and $4|q-1$, so $Z(\textbf{G}^F) \cong C_4$. Since we are assuming that $\ell \nequiv$ 1 mod $m$, this only occurs if $\ell \equiv 3$ mod 4. Suppose that $F_{\ell}|_{Z(\textbf{G}^F)}$ is the trivial automorphism of $C_4$. Then $Z(\textbf{G}^F)$ is central in the fixed points of $F_{\ell}$, Spin$_{2n}^+({\ell})$. But $Z($Spin$_{2n}^+({\ell})) \cong C_2$, so this is impossible. Therefore $F_{\ell}|_{Z(\textbf{G}^F)}$ is the non-trivial automorphism of $C_4$, so $F_{\ell}(z) = z^3 = z^{\ell}$ for every $z \in Z(\textbf{G}^F) $. 

Now, suppose that $\textbf{G}^F =E_6(q)$ or $^2E_6(q)$ and $Z(\textbf{G}^F) \cong C_3$, so $\ell \equiv 2$ mod $3$. If $F_{\ell}|_{Z(\textbf{G}^F)}$ is the trivial automorphism of $C_3$ then $Z(\textbf{G}^F)$ is central in the fixed points of $F_{\ell}$, $E_6(\ell)$. However, $Z(E_6(\ell))$ is trivial when $\ell \equiv 2$ mod $3$, so again we get a contradiction. Therefore $F_{\ell}|_{Z(\textbf{G}^F)}$ is the non-trivial automorphism of $C_3$, so $F_{\ell}(z) = z^2 = z^{\ell}$ for every $z \in Z(\textbf{G}^F) $. This shows the claim for all $\textbf{G}^F$ such that $Z(\textbf{G}^F) \cong C_m$ when $\ell \nequiv 1$ mod $m$.

The automorphism $F_{\ell}$ therefore induces an action $\bar{e}_i$ as follows.
\begin{align*}
F_{\ell}(\bar{e}_i) 	& = \hspace{1ex}   \frac{1}{m} \sum_{0 \leq a \leq m-1} \overline{\chi_i(g^a)} F_{\ell}(g^{-a}) \\
			& = \hspace{1ex}   \frac{1}{m} \sum_{0 \leq a \leq m-1} \overline{\chi_i(g^a)} g^{-\ell a}
\end{align*}
Let $\varphi = F_{\ell} ^{\phi(m)-1}$, where $\phi$ is the Euler totient function, and let $\omega$ be a primitive $m$-th root of unity such that $\chi_i(g^a) = \omega^{ia}$ for $1 \leq a \leq m$. Then
\begin{align*}
\varphi(\bar{e}_i) 	& = \hspace{1ex}    \frac{1}{m} \sum_{0 \leq a \leq m-1} (\overline{\omega^{ia}}) g^{-\ell^{\phi(m)-1}a} \\
		& = \hspace{1ex}    \frac{1}{m} \sum_{0 \leq a' \leq m-1} (\overline{\omega^{i \ell a'}}) g^{-a'},
\end{align*}
letting $a' =\ell^{\phi(m)-1}a$ so that $ \ell a' =\ell^{\phi(m)}a \equiv a $ mod $m$. Therefore 
\[ \varphi(\bar{e}_i) =\frac{1}{m} \sum_{0 \leq a' \leq m-1} \overline{\chi_i(g^{a'})} g^{-a'} = \sigma(\bar{e}_i).\] 
This shows the result for all $\textbf{G}^F$ such that $Z(\textbf{G}^F) \cong C_m$, $m > 2$ and $m$ is coprime to $\ell$. 

Finally, suppose that $\textbf{G}^F = $ Spin$_{2n}^+ (q)$, with $n \geq 4$ even and $\ell$ odd, so $Z(\textbf{G}^F) \cong C_2 \times C_2$. The irreducible characters of $C_2 \times C_2$ are rational valued so the associated central primitive idempotents of $kZ(\textbf{G}^F)$ are stabilized by Galois conjugation. It follows that the central primitive idempotents of $k\textbf{G}^F$ are also stabilized by Galois conjugation, so again, we can let $\varphi$ be the identity map. 
\end{proof}

\begin{corollary}
\label{corr:defining}
Let $k\textbf{G}^F$ be as in Theorem~\ref{lem:autgalconj}. Then,
\begin{enumerate}[(a)]
	\item For any block $b$ of $k\textbf{G}^F$, $mf(b) = 1$, and 
	\item If $Z$ is a non-trivial central subgroup of $\textbf{G}^F$ and $\overline{b}$ is a block of $k(\textbf{G}^F/Z)$, then $mf\left(\overline{b}\right) = 1$.
\end{enumerate}
\end{corollary}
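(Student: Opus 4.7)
Part~(a) is immediate: Theorem~\ref{lem:autgalconj} produces, for every block $b$ of $k\textbf{G}^F$, a group automorphism $\varphi$ of $\textbf{G}^F$ satisfying $\varphi(b)=\sigma(b)$ (on block idempotents, via the induced $k$-algebra automorphism of $k\textbf{G}^F$). This is exactly the hypothesis of Lemma~\ref{lem:groupaut}, and therefore $mf(b)=1$.

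For part~(b), the plan is to reduce to part~(a) by transferring the Morita Frobenius number from $k(\textbf{G}^F/Z)\overline{b}$ back to some $k\textbf{G}^F b$ via Lemma~\ref{lem:domblocks}~(d). The crucial observation is that $Z$ is an $\ell'$-subgroup of $\textbf{G}^F$. This holds because $\textbf{G}$ is simply-connected in defining characteristic $\ell$, so the scheme-theoretic centre of $\textbf{G}$ is a diagonalisable group scheme whose $\overline{\mathbb{F}_\ell}$-points form an $\ell'$-group; hence $Z \leq Z(\textbf{G}^F)$ is automatically $\ell'$. One can equivalently read this off from the explicit list of centres occurring in the proof of Theorem~\ref{lem:autgalconj}: every possible $Z(\textbf{G}^F)$ there (trivial, $C_2$, a cyclic $C_m$ with $\gcd(m,\ell)=1$, or $C_2\times C_2$ with $\ell$ odd) has order coprime to $\ell$.

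Next I would verify that every block $\overline{b}$ of $k(\textbf{G}^F/Z)$ is dominated by at least one block of $k\textbf{G}^F$. Writing $1 = \sum_i b_i$ as a sum of block idempotents in $k\textbf{G}^F$ and applying the surjection $\mu \colon k\textbf{G}^F \to k(\textbf{G}^F/Z)$, we get $1 = \sum_i \mu(b_i)$ in $k(\textbf{G}^F/Z)$, a sum of orthogonal central idempotents. The primitive central idempotent $\overline{b}$ must therefore appear as a summand of $\mu(b_i)$ for some $i$, which by definition means $b_i$ dominates $\overline{b}$. Since $Z$ is an $\ell'$-group, Lemma~\ref{lem:domblocks}~(d) then supplies a unique such $b$ and a $k$-algebra isomorphism $k\textbf{G}^F b \cong k(\textbf{G}^F/Z)\overline{b}$. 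Combining this with part~(a) yields $mf(\overline{b}) = mf(b) = 1$.

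There is no genuine obstacle in either part; the only step that requires any real argument is the $\ell'$-ness of $Z(\textbf{G}^F)$ in defining characteristic, which is standard and already implicit in the case analysis of Theorem~\ref{lem:autgalconj}.
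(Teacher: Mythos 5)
Your proof is correct and follows essentially the same route as the paper: part~(a) is Theorem~\ref{lem:autgalconj} plus Lemma~\ref{lem:groupaut}, and part~(b) reduces to part~(a) via Lemma~\ref{lem:domblocks}~(d), using that $Z\leq Z(\textbf{G}^F)$ is an $\ell'$-group in defining characteristic. The only difference is that you spell out two points the paper leaves implicit — why $Z(\textbf{G}^F)$ is $\ell'$, and why every block of $k(\textbf{G}^F/Z)$ is dominated by some block of $k\textbf{G}^F$ — both of which are standard and correctly argued.
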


\begin{proof}
Part (a) follows from Theorem~\ref{lem:autgalconj} and Lemma~\ref{lem:groupaut}. For part (b), suppose that $\overline{b}$ is a block of $k(\textbf{G}^F/Z)$ dominated by a block $b$ of $k\textbf{G}^F$. Then by part (a), $mf(b) = 1$. As we are in defining characteristic, $Z(\textbf{G}^F)$ is an $\ell'$-group, so it follows from Lemma~\ref{lem:domblocks} (d) that $k\textbf{G}^Fb \cong k(\textbf{G}^F/Z)\overline{b}$ as $k$-algebras. Therefore $mf\left(\overline{b}\right)= mf(b) = 1$. 
\end{proof}

%%%%%%%%%%%%%%%%%%%%%%%%%%%%%%%%%%%%%%%%%%%%%%%%%%%%%%%%
%%%%%%%% UNIPOTENT BLOCKS, NON DEFINING CHAR %%%%%%%%%%%%%%%%%%%%%%%%%%%%
%%%%%%%%%%%%%%%%%%%%%%%%%%%%%%%%%%%%%%%%%%%%%%%%%%%%%%%%
%\pagebreak
\section{UNIPOTENT BLOCKS OF FINITE GROUPS OF LIE TYPE IN NON-DEFINING CHARACTERISTIC}
\label{sec:unip}

In Section 5 we continue to assume that $k = \overline{\mathbb{F}}_{\ell}$, an algebraic closure of the field of $\ell$ elements. Let $p$ be a prime different to $\ell$, and let $\textbf{G}$ be a simple simply-connected algebraic group defined over an algebraic closure of the field of $p$ elements. Fix $q$, a power of $p$, and let $F: \textbf{G} \rightarrow \textbf{G}$ be the Frobenius morphism with respect to an $\mathbb{F}_q$-structure. Let $\textbf{G}^F$ be the fixed points of $\textbf{G}$ under $F$ -- a finite group of Lie type in non-defining characteristic. First we recall some standard notions from $e$-Harish Chandra Theory. See \cite{D/M} and \cite{B/M/M} for more details. 

%%%%%%%%%%%%%%%%%%%
\subsection{e-Harish Chandra Theory and Unipotent Blocks.}
\label{subsec:dHarChandTheory}
%%%%%%%%%%%%%%%%%%%

We denote by $P_{\left(\textbf{G}, F\right)}(x)$ the polynomial order of $\textbf{G}^F$; i.e. $P_{\left(\textbf{G}, F\right)}(x)$ is the unique polynomial such that $P_{(\textbf{G}, F)}(q^m) = \left|\textbf{G}^F\right|^m$ for infinitely many $m \in \mathbb{N}$. An $F$-stable torus $\textbf{T}$ is called a \textit{$e$-torus} if $P_{(\textbf{T},F)}(x)$ is a power of the $e$-th cyclotomic polynomial, $\Phi_e$, where $e$ is some natural number. An \textit{$e$-split Levi subgroup} \textbf{L} of $\textbf{G}$ is the centralizer in $\textbf{G}$ of some $e$-torus of $\textbf{G}$. Recall that for an $F$-stable Levi subgroup $\textbf{L}$ in an $F$-stable parabolic $\textbf{P}$ of $\textbf{G}$, there exist linear maps \textit{Deligne Lusztig induction} and \textit{restriction} given by 
\[ R_{\textbf{L} \subset \textbf{P}}^{\textbf{G}}: \mathbb{Z} \mbox{Irr}_K(\textbf{L}^F) \rightarrow \mathbb{Z} \mbox{Irr}_K(\mathbf{G}^F)\]
\[ ^*R_{\textbf{L} \subset \textbf{P}}^{\textbf{G}}: \mathbb{Z} \mbox{Irr}_K(\textbf{G}^F) \rightarrow \mathbb{Z} \mbox{Irr}_K(\mathbf{L}^F).\] 

An irreducible character $\chi$ of $\textbf{G}^F$ is called \textit{unipotent} if there exists an $F$-stable torus $\textbf{T}$ such that $\chi$ is a constituent of $R_{\textbf{T}}^{\textbf{G}}(1)$. The set of unipotent characters of $\textbf{G}^F$ is denoted by $\mathcal{E}(\textbf{G}^F, 1)$. Although it is not known in general whether $R_{\textbf{L} \subset \textbf{P}}^{\textbf{G}}$ and $^*R_{\textbf{L} \subset \textbf{P}}^{\textbf{G}}$ are independent of the choice of $\textbf{P}$, they are known to be independent for unipotent characters \cite{B/Mi}. We will therefore drop the reference to $\textbf{P}$ and denote Deligne Lusztig induction and restriction by $R_\mathbf{L}^\textbf{G}$ and $^*R_\mathbf{L}^\textbf{G}$ respectively. An $\ell$-block of $\textbf{G}^F$ is \textit{unipotent} if it contains a unipotent character. An irreducible character $\chi$ of $\textbf{G}^F$ is \textit{$e$-cuspidal} if $ ^*R_{\textbf{L}}^{\textbf{G}}(\chi) = 0$ for all proper $e$-split Levi subgroups $\textbf{L}$ of $\textbf{G}$. Note that \textit{cuspidal} is widely used instead of 1-cuspidal. 

A pair $(\textbf{L}, \lambda)$ is called \textit{unipotent $e$-split} if $\textbf{L}$ is an $e$-split Levi of $\textbf{G}$ and $\lambda$ is a unipotent character of $\textbf{L}^F$. If $\lambda$ is also $e$-cuspidal, then $(\textbf{L}, \lambda)$ is called a \textit{unipotent $e$-cuspidal pair}. The \textit{e-Harish Chandra series} above a unipotent $e$-cuspidal pair $(\mathbf{L},\lambda)$ is the set of unipotent characters 
\[ \mbox{Irr}_K (\textbf{G}^F, (\textbf{L}, \lambda)) = \{\gamma \in \mathcal{E}(\textbf{G}^F, 1) : \gamma \mbox{ is an irreducible constituent of } R_\mathbf{L}^\textbf{G}(\lambda)\}.\]
The set of irreducible unipotent characters of $\textbf{G}^F$ is partitioned by the $e$-Harish Chandra series of $\textbf{G}^F$-conjugacy classes of unipotent $e$-cuspidal pairs \cite[Theorem 7.5 (a)]{B/M}:
\[ \mbox{Irr}_K (\textbf{G}^F) = \dot{\bigcup} \mbox{ Irr}_K (\textbf{G}^F, (\textbf{L}, \lambda)),\]
where the $(\textbf{L}, \lambda)$ run over a system of representatives of $\textbf{G}^F$-conjugacy classes of unipotent $e$-cuspidal pairs of $\textbf{G}$. A unipotent $e$-cuspidal pair $(\textbf{L}, \lambda)$ is said to have \textit{$\ell$-central defect} if \\${\lambda(1)_{\ell}|Z(\textbf{L})^F|_{\ell}=|\textbf{L}^F|_{\ell}}$. If $\ell$ is odd, good for $\textbf{G}$ (see \cite[Section 1.1]{C/E}), and $\ell \neq 3$ if $^3D_4$ is involved in $\textbf{G}$, then all unipotent $e$-cuspidal pairs of $\textbf{G}$ are of $\ell$-central defect \cite[Proposition 4.3]{C/E}. 

We define $e_{\ell}(q)$ to be the order of $q$ modulo $\ell$ if $\ell > 2$, and the order of $q$ modulo 4 if $\ell =2$. 

\pagebreak
\begin{theorem}
\label{thrm:key}
Let $e = e_{\ell}(q)$. 
\begin{enumerate}[(a)]
	\item Let $(\textbf{L}, \lambda)$ be a unipotent $e$-cuspidal pair of $\textbf{G}$. Then all irreducible constituents of $R_{\mathbf{L}}^{\mathbf{G}}(\lambda)$ lie in the same $\ell$-block, $b_{\textbf{G}^F}(\textbf{L}, \lambda)$, of $\textbf{G}^F$.
	\item There exists a surjection 
		\begin{align*}
		\left\{ \begin{array}{c}
				\textbf{G}^F \mbox{-conjugacy classes of} \\
				\mbox{unipotent }e \mbox{-cuspidal } \\
				\mbox{pairs of } \textbf{G} 
		\end{array} \right\} & \twoheadrightarrow  \left\{ \begin{array}{c} Unipotent  \\ \ell \mbox{-blocks of }\textbf{G}^F \end{array} \right\}\\
		(\textbf{L}, \lambda) 								& \mapsto 		     b_{\textbf{G}^F}(\textbf{L}, \lambda)
		\end{align*}
		where $(\textbf{L}, \lambda)$ is a representative of a $\textbf{G}^F$-conjugacy class of unipotent $e$-cuspidal pairs of $\textbf{G}$ and $b_{\textbf{G}^F}(\textbf{L}, \lambda)$ is the $\ell$-block of $\textbf{G}^F$ containing all irreducible components of $R_{\mathbf{L}}^{\mathbf{G}}(\lambda)$.
	\item The surjection in (b) restricts to a bijection if we only consider unipotent $e$-cuspidal pairs of central $\ell$-defect.
		\begin{align*}
				\left\{ \begin{array}{c}
				\textbf{G}^F \mbox{-conjugacy classes of}\\
				\mbox{unipotent }e \mbox{-cuspidal pairs of } \textbf{G}\\
				\mbox{of } \ell \mbox{-central defect}
		\end{array} \right\} & \leftrightarrow   \left\{  \begin{array}{c} Unipotent  \\ \ell \mbox{-blocks of }\textbf{G}^F \end{array}  \right\}\\
		(\textbf{L}, \lambda) 									& \mapsto 		     b_{\textbf{G}^F}(\textbf{L}, \lambda)
		\end{align*}
		In particular, when $\ell$ is odd, good for $\textbf{G}$ and $\ell \neq 3$ if $^3D_4$ is involved in $\textbf{G}$, then the surjection from part (b) is itself a bijection. 
	\item If $\ell$ is odd or $\textbf{G}$ is of exceptional type, then the $\ell$-block $b_{\textbf{G}^F}(\textbf{L}, \lambda)$ has a defect group $P$ such that $Z(\textbf{L})^F_{\ell} \trianglelefteq P$ and $P / Z(\textbf{L})^F_{\ell}$ is isomorphic to a Sylow $\ell$-subgroup of $W_{\textbf{G}^F}(\textbf{L}, \lambda)$. 
\end{enumerate}
\end{theorem}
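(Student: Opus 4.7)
The plan is to recognize that Theorem~\ref{thrm:key} is essentially a compilation of structural results on unipotent $\ell$-blocks due to Brou\'e--Malle--Michel and Cabanes--Enguehard, and to prove it by stringing together citations to the appropriate theorems in the forms stated in [B/M], [B/M/M] and [C/E]. Since no genuinely new input is needed, the work lies in matching the hypotheses of this section (simply connected $\textbf{G}$, Frobenius $F$, and the convention $e=e_\ell(q)$ equal to the order of $q$ mod $\ell$ or mod $4$) to those under which the cited results are proved.

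For part (a) I would appeal to the block-distribution theorem for Deligne--Lusztig induction from unipotent $e$-cuspidal pairs: when $(\textbf{L},\lambda)$ is a unipotent $e$-cuspidal pair, all constituents of $R_{\textbf{L}}^{\textbf{G}}(\lambda)$ share an $\ell$-block, so the common block $b_{\textbf{G}^F}(\textbf{L},\lambda)$ is well-defined. Part (b) then follows formally: the $e$-Harish Chandra partition of $\mathcal{E}(\textbf{G}^F,1)$ recalled just before the statement shows that every unipotent character lies in some series $\mathrm{Irr}_K(\textbf{G}^F,(\textbf{L},\lambda))$, so every unipotent $\ell$-block is of the form $b_{\textbf{G}^F}(\textbf{L},\lambda)$ for at least one $\textbf{G}^F$-conjugacy class of pairs. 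Two non-conjugate pairs may still give the same block, which is why (b) is only a surjection in general.

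Part (c) is where Cabanes--Enguehard enters decisively: their theorem shows that restricting attention to pairs of $\ell$-central defect makes the map injective, giving the claimed bijection; the second assertion of (c) is then immediate from Proposition~4.3 of [C/E] (already cited above the statement), which says that under the conditions on $\ell$ (odd, good for $\textbf{G}$, and $\ell\neq 3$ if $^3D_4$ is involved) every unipotent $e$-cuspidal pair is automatically of $\ell$-central defect. For part (d), the description of the defect group as an extension of $Z(\textbf{L})^F_\ell$ by a Sylow $\ell$-subgroup of $W_{\textbf{G}^F}(\textbf{L},\lambda)$ is the defect-group theorem of Cabanes--Enguehard in the odd case and, for exceptional types (including the mild cases needed at $\ell=2$), of Enguehard, each of which I would invoke directly.

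The only real obstacle is bookkeeping. In particular, I would have to check that the $\ell=2$ convention $e_\ell(q)=\mathrm{ord}_4(q)$ used here is compatible with the statement of the defect-group theorem for exceptional types, and that the $^3D_4$ caveat in part (c) lines up with the hypotheses under which [C/E, Proposition~4.3] is proved. Once these conventions are matched, there is no further work to do beyond citation.
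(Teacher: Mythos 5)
Your overall reading of the theorem as a compilation of known structural results is accurate, and for parts (a)--(c) your approach matches the paper exactly in spirit: the paper simply cites Enguehard's Theorem A in \cite{E}, which packages the block distribution, the surjection, and the $\ell$-central-defect bijection (including the observation from \cite[Proposition 4.3]{C/E} that the good-prime case is automatic) into a single statement, whereas you split the attribution across \cite{B/M}, \cite{B/M/M} and \cite{C/E}. That is a cosmetic difference. The genuine divergence is in part (d). You propose to invoke the defect-group descriptions of Cabanes--Enguehard and Enguehard directly, but the paper does not do this: instead it derives (d) by an explicit Brauer-pair argument drawn from Kessar--Malle. Concretely, the paper starts from the chain $(\{1\},b)\trianglelefteq (Z(\textbf{L})^F_\ell, b_{\textbf{L}^F}(\lambda))\trianglelefteq (P,e_b)$ of \cite[Theorem 7.12]{K/M}, uses that the middle pair is self-centralizing, applies \cite[Lemma 2.1]{K/M} to identify $P/Z(\textbf{L})^F_\ell$ with a Sylow $\ell$-subgroup of $N_{\textbf{G}^F}(Z(\textbf{L})^F_\ell, b_{\textbf{L}^F}(\lambda))/C_{\textbf{G}^F}(Z(\textbf{L})^F_\ell)$, and then substitutes $C_{\textbf{G}^F}(Z(\textbf{L})^F_\ell)=\textbf{L}^F$ to land on $W_{\textbf{G}^F}(\textbf{L},\lambda)$. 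So there is some actual (if short) work in (d), precisely because the defect-group theorems in the literature are not stated in exactly the normal-extension form $Z(\textbf{L})^F_\ell\trianglelefteq P$ with the stated quotient under the $e=e_\ell(q)$ convention used here. Your instinct that the $\ell=2$ convention and the $^3D_4$ caveat need checking is right; the way the paper resolves this bookkeeping is to route (d) through the Brauer-pair lemma rather than through a quoted defect-group theorem. If you do want to prove (d) by citation alone, you would need to verify that the cited result really covers the exceptional-type bad-prime cases in this precise form, which the paper evidently chose not to rely on.
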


\begin{proof}
Parts (a), (b) and (c) were proved by Enguehard \cite[Theorem A]{E}. It therefore only remains to show part (d). By the proof of \cite[Theorem 7.12]{K/M}, for an $\ell$-block $b=b_{\textbf{G}^F}(\textbf{L}, \lambda)$, we have the following inclusion of Brauer pairs of $\textbf{G}^F$
\[ \left(\{1\}, b \right) \trianglelefteq \left(Z(\textbf{L})^F_{\ell}, b_{\textbf{L}^F}(\lambda)\right)  \trianglelefteq \left(P, e_{b}\right),\]
where $b_{\textbf{L}^F}(\lambda)$ the block of $k\textbf{L}^F$ containing $\lambda$, $e_{b}$ is a block of $C_{\textbf{G}^F}(P)$, $ \left(Z(\textbf{L})^F_{\ell}, b_{\textbf{L}^F}(\lambda)\right)$ is self-centralizing and $\left(P, e_{b}\right)$ is maximal. By \cite[Lemma 2.1]{K/M}, $  P / \left(P \cap Z(\textbf{L})^F_{\ell} \right) = P / Z(\textbf{L})^F_{\ell} $ is isomorphic to a Sylow $\ell$-subgroup of 
\[N_{\textbf{G}^F}\left(Z(\textbf{L})^F_{\ell}, b_{\textbf{L}^F}(\lambda)\right) \big/ C_{\textbf{G}^F}\left(Z(\textbf{L})^F_{\ell}\right).\]
Since $ C_{\textbf{G}^F}\left(Z(\textbf{L})^F_{\ell}\right) = \textbf{L}^F$ (see the proof of \cite[Theorem 7.12]{K/M2}), $P / Z(\textbf{L})^F_{\ell}$ is therefore isomorphic to a Sylow $\ell$-subgroup of 
\[N_{\textbf{G}^F}\left(Z(\textbf{L})^F_{\ell}, b_{\textbf{L}^F}(\lambda)\right) \big/ \textbf{L}^F = W_{\textbf{G}^F}(\textbf{L}, \lambda),\]
as required.
\end{proof}

\begin{lemma}
\label{lem:lambdarat}
Let $(\textbf{L}, \lambda)$ be a unipotent $e$-cuspidal pair of $\textbf{G}$ and suppose that $\lambda$ is rational valued. Then $mf\left(b_{\textbf{G}^F}(\textbf{L}, \lambda)\right)~=~1$.
\end{lemma}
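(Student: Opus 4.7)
The plan is to reduce the problem to showing that the Galois conjugate $\sigma(b)$ of $b := b_{\textbf{G}^F}(\textbf{L},\lambda)$ coincides with $b$. Once this is established, Lemma~\ref{lem:galconj} immediately yields $k\textbf{G}^F b^{(\ell)} \cong k\textbf{G}^F \sigma(b) = k\textbf{G}^F b$ as $k$-algebras, whence $frob(b)=1$ and a fortiori $mf(b)=1$, exactly as in the proofs of Proposition~\ref{lem:mainlemma}(a), (b) and Lemma~\ref{lem:groupaut}.

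The main ingredient will be the well-known fact that Deligne--Lusztig induction commutes with the Galois action on characters, i.e.\ $\hat{\sigma}\bigl(R_{\mathbf{L}}^{\mathbf{G}}(\lambda)\bigr) = R_{\mathbf{L}}^{\mathbf{G}}\bigl({}^{\hat{\sigma}}\lambda\bigr)$. This can be extracted from the explicit character formula for $R_{\mathbf{L}}^{\mathbf{G}}$ (the values of $\lambda$ and of the Green functions both transform in the expected way under $\hat{\sigma}$) and is standard; see for example \cite{D/M}. Under the hypothesis that $\lambda$ is rational-valued, ${}^{\hat{\sigma}}\lambda = \lambda$, so $R_{\mathbf{L}}^{\mathbf{G}}(\lambda)$ is itself a $\hat{\sigma}$-invariant (i.e.\ rational-valued) virtual character.

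From here, write $R_{\mathbf{L}}^{\mathbf{G}}(\lambda) = \sum_{i} n_i \chi_i$ with the $\chi_i$ distinct irreducible characters of $\textbf{G}^F$ and $n_i \in \mathbb{Z}\setminus\{0\}$. The $\hat{\sigma}$-invariance of this virtual character forces $\hat{\sigma}$ to permute the multiset of pairs $(\chi_i,n_i)$, and in particular to permute the finite set $\{\chi_i\}$. By Theorem~\ref{thrm:key}(a) every $\chi_i$ lies in $\mathrm{Irr}_K(b)$, while by Lemma~\ref{lem:sigmahat}(b) every ${}^{\hat{\sigma}}\chi_i$ lies in $\mathrm{Irr}_K(\sigma(b))$. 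Fixing any one index, ${}^{\hat{\sigma}}\chi_i$ then belongs to both $\mathrm{Irr}_K(b)$ and $\mathrm{Irr}_K(\sigma(b))$; as the irreducible characters of distinct blocks are disjoint, this forces $b = \sigma(b)$. Equivalently, the rational-valued class function $\sum_i \chi_i$ is a sum of characters in $\mathrm{Irr}_K(b)$, and the conclusion follows directly from Proposition~\ref{lem:mainlemma}(b).

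The only real obstacle is ensuring the commutation of $\hat{\sigma}$ with $R_{\mathbf{L}}^{\mathbf{G}}$; once that is granted, the rest is a short bookkeeping argument using results already proved in Sections~\ref{sec:grouptheory} and~\ref{sec:unip}.
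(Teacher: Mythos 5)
Your main argument is correct and is essentially identical to the paper's proof: both use the commutation of $\hat{\sigma}$ with Deligne--Lusztig induction (citing \cite[Proposition 12.2]{D/M}), the rationality of $\lambda$ to get ${}^{\hat{\sigma}}\lambda=\lambda$, Theorem~\ref{thrm:key}(a) to place all constituents of $R_{\mathbf{L}}^{\mathbf{G}}(\lambda)$ in $\mathrm{Irr}_K(b)$, Lemma~\ref{lem:sigmahat}(b) to conclude $\sigma(b)=b$, and Lemma~\ref{lem:galconj} to finish. One small caveat on your closing ``equivalently'' remark: $\hat{\sigma}$-invariance of $\sum_i\chi_i$ does \emph{not} on its own imply that this sum is rational-valued (that would require invariance under the full Galois group of the relevant cyclotomic extension, not just the single automorphism $\hat{\sigma}$), so Proposition~\ref{lem:mainlemma}(b) is not directly applicable there; but since this is offered only as an alternate phrasing of an argument you have already completed via $b=\sigma(b)$ and Lemma~\ref{lem:galconj}, it does not affect the validity of the proof.
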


\begin{proof}
Let $b = b_{\textbf{G}^F}(\textbf{L}, \lambda) $ and assume that $\lambda$ is rational valued so $^{\hat{\sigma}}{\lambda} = \lambda$ (see Section~\ref{subsec:blocktheory}).  By the Deligne Lusztig induction character formula \cite[Proposition 12.2]{D/M}, $^{\hat{\sigma}}\left( R_{\textbf{L}}^{\textbf{G}}(\lambda)  \right) = R_{\textbf{L}}^{\textbf{G}}\left({^{\hat{\sigma}}{\lambda}}\right)$. Suppose $\chi \in R_{\textbf{L}}^{\textbf{G}}(\lambda) \subseteq$~Irr$_K(b)$. Then ${^{\hat{\sigma}}{\chi}} \in R_{\textbf{L}}^{\textbf{G}}({^{\hat{\sigma}}{\lambda}}) = R_{\textbf{L}}^{\textbf{G}}({\lambda})$, so ${^{\hat{\sigma}}{\chi}} \in $ Irr$_K(b)$. By Lemma~\ref{lem:sigmahat}~(b), Irr$_K(\sigma(b)) = \{ ^{\hat\sigma}{\chi} ~|~ \chi \in \mbox{ Irr}_K(b)\}$, so it follows that $\sigma(b) = b$. Therefore $k\textbf{G}^Fb \cong k\textbf{G}^Fb ^{(\ell)}$ as $k$-algebras by Lemma~\ref{lem:galconj}, so $frob(b) = 1$, hence $mf(b) = 1$.
\end{proof}

\begin{lemma}
\label{lem:cuspchars}
Let $b$ be a block of $k\textbf{G}^F$ containing an $e$-cuspidal unipotent character $\lambda$ of central $\ell$-defect. Suppose that $Z\left([\textbf{G}, \textbf{G}]^F\right)$ is an $\ell'$-group. Then all characters in Irr$_K(b)$ are $e$-cuspidal.
\end{lemma}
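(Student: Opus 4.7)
The strategy is to show that $b$ is a block of defect zero; then $\text{Irr}_K(b) = \{\lambda\}$, and the conclusion is immediate because $\lambda$ is $e$-cuspidal by assumption.

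First I would translate the centrality hypothesis. Since $\textbf{G}$ is simple and simply-connected, $\textbf{G} = [\textbf{G},\textbf{G}]$, so $[\textbf{G},\textbf{G}]^F = \textbf{G}^F$ and the hypothesis becomes: $Z(\textbf{G}^F)$ is an $\ell'$-group. Via the obvious inclusion $Z(\textbf{G})^F \subseteq Z(\textbf{G}^F)$, this forces $|Z(\textbf{G})^F|_\ell = 1$.

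Next I would unwind the central $\ell$-defect condition on $\lambda$. Because $\lambda$ is $e$-cuspidal and unipotent, $(\textbf{G},\lambda)$ is a unipotent $e$-cuspidal pair with $\textbf{L} = \textbf{G}$, so by the definition given in Section~\ref{subsec:dHarChandTheory} the central $\ell$-defect condition reads $\lambda(1)_\ell \cdot |Z(\textbf{G})^F|_\ell = |\textbf{G}^F|_\ell$. Combined with the previous step, this gives $\lambda(1)_\ell = |\textbf{G}^F|_\ell$.

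Finally I would apply the standard criterion (already invoked in the proof of Proposition~\ref{lem:mainlemma}(c) via \cite[Theorem 6.1.1]{Block}) that a character $\chi$ satisfying $\chi(1)_\ell = |\textbf{G}^F|_\ell$ is of defect zero and hence lies alone in its $\ell$-block. Applied to $\lambda \in \text{Irr}_K(b)$, this forces $\text{Irr}_K(b) = \{\lambda\}$, whence every character of $b$ is $e$-cuspidal, as required.

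There is no substantial obstacle; the proof is essentially an unwinding of definitions together with a standard defect-zero criterion. The one subtle point worth flagging is the distinction between $Z(\textbf{G})^F$ and $Z(\textbf{G}^F)$, but here only the trivial inclusion $Z(\textbf{G})^F \subseteq Z(\textbf{G}^F)$ is required.
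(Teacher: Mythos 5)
Your reduction to defect zero is formally valid under the literal hypothesis of Section~\ref{sec:unip} that $\textbf{G}$ is simple and simply-connected, since then $[\textbf{G},\textbf{G}]=\textbf{G}$ and the centrality hypothesis collapses to $Z(\textbf{G}^F)$ being an $\ell'$-group. However, this reading makes the lemma trivially degenerate and does not match how it is actually invoked: in the proof of Theorem~\ref{thrm:Puig}, the lemma is applied to a block $f$ of $\mathcal{O}\textbf{L}^F$ where $\textbf{L}$ is a proper $1$-split Levi subgroup, which is connected reductive but not semisimple, so $[\textbf{L},\textbf{L}]\subsetneq\textbf{L}$. The very fact that the statement is phrased in terms of $Z([\textbf{G},\textbf{G}]^F)$ rather than $Z(\textbf{G}^F)$ signals that $\textbf{G}$ should be read as a general connected reductive group here. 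In that setting your argument breaks down: the block $b$ has defect $|Z(\textbf{G})^F|_{\ell}$, which need not be trivial, so $b$ is not of defect zero and $\mathrm{Irr}_K(b)$ is not a singleton.

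The paper's proof handles this correctly. It restricts $\lambda$ to the derived subgroup, $\lambda_0 = \lambda|_{[\textbf{G},\textbf{G}]^F}$, notes that $\lambda_0$ is irreducible (Lusztig), and combines the central $\ell$-defect condition with the factorisation $|\textbf{G}^F| = |Z^\circ(\textbf{G})^F|\,|[\textbf{G},\textbf{G}]^F|$ and the $\ell'$-assumption on $Z([\textbf{G},\textbf{G}]^F)$ to conclude that it is $\lambda_0$, not $\lambda$, that lies in a defect-zero block of $[\textbf{G},\textbf{G}]^F$. Clifford theory then shows that any $\theta\in\mathrm{Irr}_K(b)$ is of the form $\omega\lambda$ for a linear character $\omega$ of the abelian quotient $\textbf{G}^F/[\textbf{G},\textbf{G}]^F$, and a short computation with Deligne--Lusztig induction shows that twisting by a linear character preserves $e$-cuspidality. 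This Clifford-theoretic step is the genuine content of the lemma and is entirely absent from your proposal; you would need to supply it to obtain the lemma in the generality it is used.
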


\begin{proof}
Let $\lambda_0 = \lambda|_{[\textbf{G}, \textbf{G}]^F}$. Because $\lambda$ is unipotent, results of Lusztig show that $\lambda_0$ is irreducible (see for example \cite[Proposition 3]{C/E2}). Since $\lambda$ is of central $\ell$-defect, $\left|\textbf{G}^F\right|_{\ell}= \lambda(1)_{\ell} \left|Z(\textbf{G}^F)\right|_{\ell} = \lambda_0(1)_{\ell} \left|Z(\textbf{G}^F)\right|_{\ell}$. As we are assuming that $Z\left([\textbf{G}, \textbf{G}]^F\right)$ is an $\ell'$-group, it follows from $\left|\textbf{G}^F\right| = \left| Z^{\circ}(\textbf{G})^F\right| \left|[\textbf{G}, \textbf{G}]^F\right|$ that $\lambda_0(1)_{\ell} = \left|[\textbf{G}, \textbf{G}]^F\right|_{\ell}$. Therefore $\lambda_0$ is in a block $\bar{b}$  of $[\textbf{G}, \textbf{G}]^F$ of defect 0.

Let $\theta \in $ Irr$_K(b)$. Since $b$ covers $\bar{b}$ and $\lambda_0$ is the only character in $\bar{b}$, $\theta$ covers $\lambda_0$. By \cite[Corollary 11.7]{C/R}, therefore $\theta = \omega \lambda$ for a uniquely determined character $\omega$ of $\textbf{G}^F /  [\textbf{G}, \textbf{G}]^F$. Since $ [\textbf{G}^F, \textbf{G}^F] \subseteq [\textbf{G}, \textbf{G}]^F$, $\textbf{G}^F /  [\textbf{G}, \textbf{G}]^F$ is abelian, so $\omega$ is a linear character.

As $\lambda$ is $e$-cuspidal, $\langle \lambda , R_{\textbf{M}}^{\textbf{L}} (\tau) \rangle~=~0$ for any proper $e$-split Levi subgroup $\textbf{M}$ of $\textbf{L}$ and for all $\tau \in $ Irr$_K\textbf{M}^F$. Because $\omega$ is linear, it follows that $\langle \omega \lambda , \omega R_{\textbf{M}}^{\textbf{L}} (\tau) \rangle = \langle \theta , R_{\textbf{M}}^{\textbf{L}} (\omega \tau) \rangle = 0$ for all $\tau \in $ Irr$_K\textbf{M}^F$. Let $\tilde{\tau} = \omega \tau$. Then $\tilde{\tau}$ runs over Irr$_K\textbf{M}^F$ as $\tau$ does, so $\langle \theta , R_{\textbf{M}}^{\textbf{L}} (\tilde{\tau}) \rangle = 0$ for all $\tilde{\tau} \in $ Irr$_K\textbf{M}^F$. Therefore $\theta$ is $e$-cuspidal, as required. 
\end{proof}

%%%%%%
% PUIG 
%%%%%%

\subsection{A result of Puig.}

Theorem~\ref{thrm:Puig} shows that under certain conditions, Puig's result \cite[Theorem 5.5]{P} can be applied to a block $b = b_{\textbf{G}^F}(\textbf{L}, \lambda)$ to show that $\mathcal{O}\textbf{G}^Fb$ is Morita equivalent to a specific block of $\mathcal{O}N_{\textbf{G}^F}(\textbf{L}, \lambda)$. This result will be used later to calculate the Morita Frobenius number of some unipotent blocks of $E_8(q)$. 

First we recall the following. Suppose $M$ is a finite group with a normal $\ell'$-subgroup $U$, and suppose that $L \cong M/U$. Let $\mu: M \rightarrow L$ be the quotient map. If $d$ is the principal block of $\mathcal{O}U$, then Fong Reduction \cite[Theorem 6.8.7]{Block} yields the following inverse $\mathcal{O}$-algebra isomorphisms,
\begin{align*}
\mathcal{O}L~&  \tilde{\longrightarrow}~\mathcal{O}Md \\
x~& \longmapsto ~xd \\
\mu(y)~& \longmapsfrom~y,
\end{align*}
for all $x \in \mathcal{O}L$, $y \in \mathcal{O}Md$. 

\begin{theorem}
\label{thrm:Puig}
Suppose $\ell \geq 5$ and $\ell| q-1$. Let $(\textbf{L}, \lambda)$ be a proper unipotent $1$-cuspidal pair of $\textbf{G}$ of central $\ell$-defect. Let $b = b_{\textbf{G}^F}(\textbf{L}, \lambda)$ and suppose that $P=Z(\textbf{L})^F_{\ell}$ is a defect group of $b$.  Let $f = b_{\textbf{L}^F}(\lambda)$ be the block of $\mathcal{O}\textbf{L}^F$ containing $\lambda$. Then $f$ is a block of $\mathcal{O}N_{\textbf{G}^F}(\textbf{L}, \lambda)$ with defect group $P$ such that $\mathcal{O}\textbf{G}^Fb$ and $\mathcal{O}N_{\textbf{G}^F}(\textbf{L}, \lambda)f$ are Morita equivalent. 
\end{theorem}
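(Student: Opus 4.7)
The plan is to translate the hypotheses of this theorem into the exact input required by \cite[Theorem~5.5]{P} and then invoke it. First I would establish that $f$ really is a block idempotent of $\mathcal{O}N_{\textbf{G}^F}(\textbf{L}, \lambda)$ with defect group $P$. Since $\textbf{L}^F \trianglelefteq N := N_{\textbf{G}^F}(\textbf{L}, \lambda)$ and $N$ stabilises $\lambda$ by definition, it stabilises the block $f$ of $\mathcal{O}\textbf{L}^F$ containing $\lambda$ by conjugation, so $f$ is a central idempotent of $\mathcal{O}N$. The key point for primitivity (and for controlling the defect) is that by Theorem~\ref{thrm:key}(d), $P/Z(\textbf{L})^F_{\ell}$ is isomorphic to a Sylow $\ell$-subgroup of $W_{\textbf{G}^F}(\textbf{L}, \lambda) = N/\textbf{L}^F$; since we are assuming $P = Z(\textbf{L})^F_{\ell}$, this Sylow is trivial, so $N/\textbf{L}^F$ is an $\ell'$-group. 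Applying Fong Reduction (as recalled just before the theorem statement) to the principal block of $\mathcal{O}(N/\textbf{L}^F)^{*}$ (or, equivalently, decomposing $\mathcal{O}Nf$ as an extension of $\mathcal{O}\textbf{L}^F f$ by an $\ell'$-group) shows $f$ stays primitive in $\mathcal{O}N$ and that its defect group is the same as in $\mathcal{O}\textbf{L}^F f$, namely $P$.

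Second, I would unpack the Brauer pair data of Theorem~\ref{thrm:key}(d) in our setting. Under the hypothesis $P = Z(\textbf{L})^F_{\ell}$, the chain
\[ (\{1\}, b) \trianglelefteq (Z(\textbf{L})^F_{\ell}, b_{\textbf{L}^F}(\lambda)) \trianglelefteq (P, e_b) \]
collapses to $(\{1\}, b) \trianglelefteq (P, f)$ with $(P, f)$ a maximal $b$-Brauer pair of $\textbf{G}^F$ that is self-centralising, using also that $C_{\textbf{G}^F}(Z(\textbf{L})^F_{\ell}) = \textbf{L}^F$ (cited in the proof of Theorem~\ref{thrm:key}(d)). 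In particular $N_{\textbf{G}^F}(P, f) \subseteq N_{\textbf{G}^F}(\textbf{L}, \lambda) = N$. This is precisely the type of Brauer pair datum on which Puig's theorem operates, and it identifies the ``local'' block to which we wish to relate $b$ as $\mathcal{O}Nf$.

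Finally, I would verify Puig's remaining hypotheses on the $\ell$-local structure of $b$. The constraints $\ell \geq 5$ and $\ell \mid q-1$ (i.e.\ $e = 1$) are placed so that $P$ is abelian, all unipotent $1$-cuspidal pairs have central $\ell$-defect (by \cite[Proposition~4.3]{C/E}), and the fusion system of $b$ on $P$ is controlled by $N$. Once these conditions are in place, \cite[Theorem~5.5]{P} applies directly and yields the asserted Morita equivalence between $\mathcal{O}\textbf{G}^F b$ and $\mathcal{O}Nf$. I expect the main obstacle to be the verification of Puig's control-of-fusion hypothesis: showing that every $\textbf{G}^F$-conjugation between Brauer subpairs of $(P, f)$ is realised inside $N$. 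In our setting this reduces to an analysis of the relative Weyl group $W_{\textbf{G}^F}(\textbf{L}, \lambda)$, which is made tractable precisely because our hypotheses force it to be an $\ell'$-group acting on the abelian defect group $P$.
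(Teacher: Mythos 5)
Your proposal misidentifies what \cite[Theorem~5.5]{P} is and, as a result, misses the decisive steps of the argument. Puig's theorem is not a control-of-fusion criterion of the form ``if $N$ controls the $b$-fusion on $P$, then $\mathcal{O}\textbf{G}^Fb$ and $\mathcal{O}Nf$ are Morita equivalent''; it is a Harish-Chandra--theoretic, Hecke-algebra-level result whose input is an algebra identity, not a fusion statement. Your final paragraph, which frames ``Puig's control-of-fusion hypothesis'' as the main obstacle and gestures at it being handled because $W_{\textbf{G}^F}(\textbf{L},\lambda)$ is an $\ell'$-group acting on an abelian $P$, is therefore aimed at the wrong target and never engages with what actually needs to be verified.

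Concretely, the proof requires passing to the $F$-stable parabolic $\textbf{M}$ with Levi decomposition $\textbf{M} = \textbf{U} \rtimes \textbf{L}$; this is precisely where the hypothesis $e=1$ (i.e.\ $\ell \mid q-1$) is indispensable, because for $e>1$ an $e$-split Levi is not contained in any $F$-stable parabolic, so $\textbf{M}^F = \textbf{U}^F\rtimes\textbf{L}^F$ would not exist. Your reading of $\ell\mid q-1$ as being there to ensure $P$ abelian or central defect misses this structural point entirely. One then works with the block $c$ of $\mathcal{O}\textbf{M}^F$ that dominates $f$ (so $c = fd$, $d$ the principal block of $\mathcal{O}\textbf{U}^F$), and the decisive step is the identity
\[
c\bigl(\mathcal{O}\textbf{G}^F\bigr)c \;=\; c\bigl(\mathcal{O}N\bigr)c,
\]
which is what \cite[Theorem~5.5]{P} feeds on. Establishing it requires Lemma~\ref{lem:cuspchars}: because $\lambda$ is $1$-cuspidal of central $\ell$-defect, \emph{every} character in $\mathrm{Irr}_K(f)$ is $1$-cuspidal, and this is what makes the arguments of \cite[5.3]{P} apply. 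Your proposal never mentions $\textbf{M}$, $\textbf{U}$, $c$, the $1$-cuspidality of all of $\mathrm{Irr}_K(f)$, nor the Brauer-pair nonvanishing $Br_P(c)\neq 0$ that one must also check. Finally, your claimed direct verification that $f$ remains a block of $\mathcal{O}N$ with defect $P$ does not work as written: an $N$-stable block of $\mathcal{O}\textbf{L}^F$ with $N/\textbf{L}^F$ an $\ell'$-group need not remain primitive in $Z(\mathcal{O}N)$ (the blocks of $\mathcal{O}Nf$ correspond to blocks of a twisted group algebra of $N/\textbf{L}^F$, which is semisimple and generally has several blocks), and the appeal to ``Fong reduction to the principal block of $\mathcal{O}(N/\textbf{L}^F)^*$'' is not meaningful here. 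In the paper this primitivity and the defect-group identification are \emph{conclusions} of \cite[5.5.4]{P}, not independently established facts. Of the observations you make, the ones that are correct and are in fact used are: $N/\textbf{L}^F = W_{\textbf{G}^F}(\textbf{L},\lambda)$ is an $\ell'$-group because $P = Z(\textbf{L})^F_\ell$; $C_{\textbf{G}^F}(P) = \textbf{L}^F$; and the Brauer pair inclusion $(\{1\},b)\subseteq(P,f)$. But these alone do not constitute a proof; the parabolic setup and the Hecke-algebra identity are the missing core.
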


\begin{proof} 
Since $\textbf{L}$ is a 1-split Levi subgroup, $\textbf{L}$ is contained in an $F$-stable parabolic subgroup of $\textbf{G}$, $\textbf{M}$, say. Let $\textbf{U}$ be the unipotent radical of $\textbf{M}$, so $\textbf{M} = \textbf{U} \rtimes \textbf{L}$. Set $\textbf{M}^F = \textbf{U}^F \rtimes \textbf{L}^F$. Then $\textbf{M}^F / \textbf{U}^F \cong \textbf{L}^F$. Let $\mu : \textbf{M}^F \rightarrow \textbf{L}^F$ be the quotient map. Let $N = N_{\textbf{G}^F}(\textbf{L}, \lambda)$ and let $c$ be the block of $k\textbf{M}^F$ that dominates $f$. We show that the hypotheses of \cite[Theorem 5.5]{P} are satisfied by $\textbf{M}^F$, $N$, $\textbf{L}^F$, $c$ and $f$. 

Because $\textbf{U}^F$ is an $\ell'$-group, $c$ dominates a unique block of $\mathcal{O}\textbf{L}^F$ by \cite[Ch. 5 Theorem 8.8]{N/T}, so $\mu(c) = f$. Let $d$ be the principal block of $\mathcal{O}\textbf{U}^F$. Then it follows from the isomorphisms due to Fong Reduction mentioned above, that $c = fd$. Since $d$ is central in $\mathcal{O}M$, therefore $cf = c$. Since $\lambda$ is a 1-cuspidal unipotent character in $f$ with central $\ell$-defect, Lemma~\ref{lem:cuspchars} shows that all the characters in $f$ are 1-cuspidal. It then follows by arguments given in \cite[5.3]{P} that $c(\mathcal{O}\textbf{G}^F)c = c(\mathcal{O}N)c$. 

Next, since $N_{\textbf{G}^F}(\textbf{L}, \lambda) \subseteq N_{\textbf{G}^F}(\textbf{L}^F, \lambda)$, $N$ normalizes $\textbf{L}^F$ and therefore $f$. By the proof of \cite[Corollary 1.18]{D/M}, $N_{\textbf{G}}(\textbf{L}) \cap \textbf{U} = \{1\}$. Therefore $N_{\textbf{M}^F}(\textbf{L}, \lambda) \cap \textbf{U}^F = \{1\}$, so $N_{\textbf{M}^F}(\textbf{L}, \lambda) \subseteq \textbf{L}^F $ and thus $\textbf{L}^F =  N_{\textbf{M}^F}(\textbf{L}, \lambda) = N \cap \textbf{M}^F$. By \cite[Proposition 2.2 (ii)]{C/E}, since $\ell \geq 5$ and $\textbf{L}$ is a proper Levi subgroup of $\textbf{G}$, $\textbf{L}^F = C_{\textbf{G}^F}\left(Z(\textbf{L})^F_{\ell}\right) = C_{\textbf{G}^F}(P)$. Therefore $f$ is a block of $\mathcal{O}C_{\textbf{G}^F}(P)$, so $Br_P(f) = f$. It follows that $Br_P(c) = Br_P(df) = \frac{1}{| \textbf{U}^F |}Br_P(f) \neq 0$, so all hypotheses of  \cite[Theorem 5.5]{P} are satisfied. 

Recall that we have the following inclusion of Brauer pairs $(1, b) \subseteq (P, f)$ from the proof of Theorem~\ref{thrm:key} (d). Therefore $Br_P(b)f = f$. Since $N/\textbf{L}^F$, the relative Weyl group of $(\textbf{L}^F, \lambda)$ in $\textbf{G}^F$, is an $\ell'$-group, \cite[5.5.4]{P} implies that $f$ is a block of $\mathcal{O}N_{\textbf{G}^F}(\textbf{L}, \lambda)$ with defect $P$, and $\mathcal{O}Nf$ and $\mathcal{O}\textbf{G}^Fb$ are source algebra equivalent, and hence Morita equivalent.
\end{proof}

%%%%%%%%%%%%%%%%%%%
\subsection{Unipotent blocks of finite groups of Lie type in non-defining characteristic.}
\label{subsec:proof}
%%%%%%%%%%%%%%%%%%%

\begin{theorem}
\label{thrm:MainTheorem}
Let $\textbf{G}$ be a simple, simply-connected algebraic group defined over an algebraic closure of the field of $p$ elements. Let $q$ be a power of $p$ and let $F: \textbf{G} \rightarrow \textbf{G}$ be the Frobenius morphism with respect to an $\mathbb{F}_q$-structure. Let $k$ be a field of postitive characteristic $\ell \neq p$ and let $e = e_{\ell}(q)$. Let $b$ be a unipotent $\ell$-block of $\textbf{G}^F$. Then 
\begin{enumerate} [(a)]
\item $mf(b) \leq 2$ and 
\item $mf(b) = 1$, except possibly when $b = b_{\textbf{G}^F}(\textbf{L}, \lambda)$ in one of the following situations. 
	\begin{itemize}
		\item $\textbf{G} = E_8$, $\textbf{L} = \phi_1^2.E_6$, $\lambda = E_6[\theta^i]$ $ (i=1,2)$, with $\ell=2$ and $e=1$
		\item $\textbf{G} = E_8$, $\textbf{L} = {\phi_2^2}.{^2E}_6$, $\lambda = {^2E_6}[\theta^i]$ $ (i=1,2)$, with $\ell \equiv 2$ mod 3 and $e=2$
	\end{itemize}
\end{enumerate}
\end{theorem}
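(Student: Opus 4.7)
The plan is to invoke Theorem~\ref{thrm:key}(b) to write $b = b_{\textbf{G}^F}(\textbf{L}, \lambda)$ for a unipotent $e$-cuspidal pair $(\textbf{L}, \lambda)$. Lemma~\ref{lem:lambdarat} immediately handles every case in which $\lambda$ is rational-valued, yielding $mf(b) = 1$, so the problem reduces to pairs with non-rational $\lambda$. By Lusztig's classification, such $\lambda$ arise only for $\textbf{G}$ of exceptional type $E_6$, ${}^2E_6$, $E_7$, or $E_8$, and the cuspidal examples on the relevant Levis form a short explicit list (the characters $E_6[\theta^i]$, ${}^2E_6[\theta^i]$, $E_7[\pm\xi]$, and the various $E_8[\,\cdot\,]$), all with values in small cyclotomic fields $\mathbb{Q}(\zeta_n)$ for $n \in \{3,4,5\}$. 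This leaves a finite, explicit list to check.

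For the Galois analysis, running the Deligne--Lusztig character formula argument of Lemma~\ref{lem:lambdarat} without the rationality assumption yields $\sigma(b_{\textbf{G}^F}(\textbf{L},\lambda)) = b_{\textbf{G}^F}(\textbf{L}, {}^{\hat{\sigma}}\lambda)$, so $\sigma(b) = b$ precisely when these two pairs label the same block -- equivalently, when they are $\textbf{G}^F$-conjugate in the central-defect case (Theorem~\ref{thrm:key}(c)). I would then go through the finite list and verify that, outside the two excluded $E_8$ families, one of the following holds: (i) $\hat{\sigma}$ fixes $\lambda$ because $\ell \equiv 1$ modulo the order of the relevant root of unity; (ii) the Galois partner of $\lambda$ lies in the same $W_{\textbf{G}^F}(\textbf{L},\lambda)$-orbit as $\lambda$; or (iii) under the hypotheses of Puig's Theorem~\ref{thrm:Puig} (central defect, $e = 1$, $\ell \geq 5$, $\textbf{L}$ proper), the Morita reduction $\mathcal{O}\textbf{G}^F b \sim \mathcal{O}N_{\textbf{G}^F}(\textbf{L},\lambda) f$ together with Fong reduction converts $b$ into a twisted group algebra over the relative Weyl group, to which Lemma~\ref{lem:twistedalg} or Lemma~\ref{lem:groupaut} applies. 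In each situation $\sigma(b) = b$, so $mf(b) = 1$.

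For the two excluded $E_8$ families in part (b), $\hat{\sigma}$ genuinely exchanges $E_6[\theta] \leftrightarrow E_6[\theta^2]$ (respectively ${}^2E_6[\theta] \leftrightarrow {}^2E_6[\theta^2]$) because $\ell \equiv 2 \pmod 3$ in both situations, and one traces through Theorem~\ref{thrm:key}(c) to confirm that the two associated blocks of $E_8(q)$ are distinct, so $\sigma(b) \neq b$. However, $\hat{\sigma}^2$ fixes both cuspidal characters since $\ell^2 \equiv 1 \pmod 3$, hence $\sigma^2(b) = b$, and applying Lemma~\ref{lem:galconj} to $\sigma^2$ yields $k\textbf{G}^Fb^{(\ell^2)} \cong k\textbf{G}^Fb$ as $k$-algebras. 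This gives $mf(b) \leq 2$, establishing part (a) uniformly and part (b) away from these two families.

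The main obstacle will be the case-by-case bookkeeping: tabulating non-rational cuspidal pairs for each exceptional Levi, computing the relative Weyl group action to identify Galois orbits, and -- for bad primes where Theorem~\ref{thrm:key}(c) is only a surjection and several cuspidal pairs may collapse to a single block -- verifying carefully that the Galois-conjugate block coincides with the original one. In particular, for the exceptional $E_8$ situations, where Puig's theorem does not directly apply (owing to $\ell = 2$ or $e = 2$), the argument depends crucially on the $\sigma^2$-invariance observation above rather than on any explicit Morita-equivalence reduction.
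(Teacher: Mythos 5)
Your proposal correctly identifies the framing (writing $b = b_{\mathbf{G}^F}(\mathbf{L},\lambda)$, reducing via Lemma~\ref{lem:lambdarat} to non-rational $\lambda$, treating the two exceptional $E_8$ families by passing to $\sigma^2$), but the core of your strategy for the remaining cases is wrong. You claim that outside the two excluded families one always has $\sigma(b)=b$, established by one of your mechanisms (i)--(iii). This is false. Take, for instance, $\mathbf{G}=G_2$, $\ell\geq 5$, $e=1$, $\lambda=G_2[\theta]$, $\mathbf{L}=\mathbf{G}$: here $\hat{\sigma}$ sends $G_2[\theta]$ to $G_2[\theta^2]$ whenever $\ell\equiv 2\pmod 3$, and since the cuspidal-pair-to-block map is a bijection on pairs of central $\ell$-defect (Theorem~\ref{thrm:key}(c)), $\sigma(b)$ is a genuinely \emph{different} block. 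Your mechanism (ii) does not rescue this --- the relative Weyl group stabilizes $\lambda$ by definition and does not move $G_2[\theta]$ to $G_2[\theta^2]$ --- and mechanism (iii) (Puig) only reduces to a twisted group algebra of the normalizer; it does not show $\sigma(b)=b$. In the paper this case (and the whole good-prime regime) is handled by a completely different device: the defect group of $b$ is trivial, cyclic, or dihedral, so $mf(b)=1$ by Proposition~\ref{lem:mainlemma}(d), \emph{regardless} of whether $\sigma(b)=b$. You have omitted these defect-group arguments entirely.

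More generally, your toolkit leaves out Proposition~\ref{lem:mainlemma}(c) (matrix algebra when $\chi(1)_\ell = |G|_\ell$), which the paper needs for the bad primes with $\mathbf{L}=\mathbf{G}$ and $e=1$, and Proposition~\ref{lem:mainlemma}(d), which the paper uses throughout for good primes and for the $E_7$/$E_8$ dihedral and cyclic cases. You also state that non-rational cuspidal unipotent characters occur only for $\mathbf{G}$ of type $E_6$, ${}^2E_6$, $E_7$, $E_8$; you are missing $G_2$ (with $G_2[\theta^i]$) and $F_4$ (with $F_4[\theta^i]$ and $F_4[\pm i]$), which add several rows to the case analysis. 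Your use of Puig's theorem is also not quite what the paper does: the paper applies it only to the single family $E_8$, $\mathbf{L}=\phi_1.E_6$, $\ell\geq 5$, $e=1$, and the point of the Puig step is that it lands in $k_\alpha(P\rtimes D_{12})$ where $H^2(D_{12},k^\times)\cong C_2$, so Lemma~\ref{lem:twistedalg} gives $mf=1$ --- this is a Frobenius-number statement about the twisted algebra, not a proof that $\sigma(b)=b$. Your treatment of the two excluded $E_8$ families, by contrast, does match the paper: ${}^{\hat{\sigma}^2}\lambda=\lambda$ gives $mf(b)\leq 2$. But part (a) is not actually ``established uniformly'' by that observation alone, as you claim; it requires the case-by-case $mf(b)=1$ conclusions for all other families to be in place first.
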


\begin{proof}
Let $b = b_{\textbf{G}^F}(\textbf{L}, \lambda)$ be the block of $\textbf{G}^F$ containing all irreducible constituents of $R_{\mathbf{L}}^{\mathbf{G}}(\lambda)$, where $(\textbf{L}, \lambda)$ is a unipotent $e$-cuspidal pair of $\textbf{G}$ of central $\ell$-defect, as discussed in Theorem~\ref{thrm:key}. By \cite[Proposition 5.6 and Table 1]{G}, the unipotent characters of classical finite groups of Lie type (including $^3D_4(q)$) are rational valued, so by Proposition~\ref{lem:mainlemma} (b) and Lemma~\ref{lem:lambdarat} we need only consider the cases where $\textbf{G}$ is of exceptional type, $\textbf{L}$ contains some component of exceptional type, and $\lambda$ is not rational valued. These $e$-cuspidal pairs can be identified using \cite[Appendix: Table 1]{B/M/M}, \cite{E} and \cite[Chapter 13]{C} and are listed in the following table. We have used the notation of \cite[Chapter 13]{C} for the character labels.

\begin{longtable}{c|c|c|c}
$\textbf{G}$ & 
$e$ &	
$\left(\textbf{L}, \lambda\right)$ &
Is of $\ell$-central defect for 	
\\ \hline \hline

% G_2 %%%%%%%%%%%%%%%%%%%%%

$G_2$ & 	
$1, 2$ &
$\left(G_2, G_2[\theta^i]\right)$ & 
$\ell \neq 3$
 \\ \hline

% F_4 %%%%%%%%%%%%%%%%%%%%%

$F_4$ & 	
$1, 2$ &
$\left(F_4, F_4[\theta^i]\right)$ &
 $\ell \neq 3$\\

$F_4$ & 	
$1, 2$ &
$\left(F_4, F_4[\pm i]\right)^*$  &
$\ell \neq 2$\\ \hline

% E_6 %%%%%%%%%%%%%%%%%%%%%

$E_6$ & 	
$1, 2$ &
$\left(E_6, E_6[\theta^i]\right)$&
$\ell \neq 3$  \\ \hline

% 2E_6 %%%%%%%%%%%%%%%%%%%%%

${^2E}_6$ & 	
$1, 2$ &
$\left({^2E}_6, {^2E}_6[\theta^i] \right)$ &
$\ell \neq 3$ \\ \hline

% E_7 %%%%%%%%%%%%%%%%%%%%%

$E_7$ & 	
$1$ &
$\left(E_7, E_7[\pm \xi] \right)^{\dagger}$  &
$\ell \neq 2$\\

$E_7$ & 	
$2$ &
$\left(E_7, \phi_{512,11}\right), \left(E_7, \phi_{512,12}\right)$ &
$\ell \neq 2$ \\

$E_7$ & 	
$1$ &
$\left(E_6, E_6[\theta^i]\right)$&
$\ell \neq 3$  \\

$E_7$ & 	
$2$ &
$\left(^2E_6, ^2E_6[\theta^i]\right)$&
$\ell \neq 3$   \\ \hline

% E_8 %%%%%%%%%%%%%%%%%%%%%

$E_8$ & 	
$1, 4$ &
$\left(E_8, E_8[\pm \theta^i] \right)$&
$\ell \neq 2, 3$  \\

$E_8$ & 	
$1,2$ &
$\left(E_8, E_8[\pm i] \right)$ &
$\ell \neq 2$ \\

$E_8$ & 	
$1,2,4$ &
$\left(E_8, E_8[\zeta^j] \right)$ &
$\ell \neq 5$ \\

$E_8$ & 	
$2, 4$ &
$\left(E_8, E_6[\theta^i], \phi_{2,1} \right), \left(E_8, E_6[\theta^i], \phi_{2,2} \right)$ &
$\ell \neq 5$ \\

$E_8$ & 	
$4$ &
\makecell*{
$\left(E_8, E_6[\theta^i], \phi_{1,0} \right), \left(E_8, E_6[\theta^i], \phi_{1,6} \right),$\\
$\left(E_8, E_6[\theta^i], \phi_{1,3'} \right), \left(E_8, E_6[\theta^i], \phi_{1,3''} \right),$ \\
$ \left(E_8, \phi_{4096, 11} \right), \left(E_8, \phi_{4096, 26} \right),$ \\
$  \left(E_8, \phi_{4096, 12} \right), \left(E_8, \phi_{4096, 27} \right),$\\
$\left(E_8, E_7[\pm \xi, 1] \right), \left(E_8, E_7[\pm \xi, \varepsilon] \right)$ }&
every $\ell$ \\

$E_8$ & 	
$1$ &
$\left(E_7, E_7[\pm \xi]\right)$&
$\ell \neq 2$  \\

$E_8$ & 	
$2$ &
$\left(E_7, \phi_{512,11}\right), \left(E_7, \phi_{512,12}\right)^{\ddagger}$ &
$\ell \neq 2$ \\

$E_8$ & 	
$1$ &
$\left(E_6, E_6[\theta^i]\right)$&
$\ell \neq 3$  \\

$E_8$ & 	
$2$ &
$\left({^2E}_6, {^2E}_6[\theta^i] \right)$  &
$\ell \neq 3$\\
\hline

\multicolumn{4}{c}{\makecell*{$\theta := $ exp$(2\pi i / 3)$, $\zeta := $ exp$(2\pi i / 5)$ $\xi := \sqrt{-q}$}} \\
\multicolumn{4}{c}{\makecell*{\small{*\cite{E} omits this pair for $\ell = 3$, $e=2$}  \hspace{2ex} \small{$\dagger$\cite{E} writes $E_7[\pm \zeta]$ instead of $E_7[\pm \xi]$ for $\ell = 2, e = 1$} \\
\small{$\ddagger$\cite{E} writes $E_7[\pm \xi]$ instead of $\phi_{512,11}, \phi_{512,12}$ for $\ell = 5, e = 2$}
}}
\end{longtable}

First suppose that $\ell$ is good for $\textbf{G}$. Then by inspection, the Sylow $\ell$-subgroups of $W_{\textbf{G}^F}(\textbf{L}, \lambda)$ are trivial so by Theorem~\ref{thrm:key} (d), the defect groups of $b$ are isomorphic to a Sylow $\ell$-subgroup of $Z(\textbf{L})^F$. If $\textbf{L} = \textbf{G}$, then the Sylow $\ell$-subgroups of $Z(\textbf{L}^F)$ are trivial by inspection of \cite[Table 24.2]{M/T}. By \cite[Proposition 3.6.8]{C}, since $\textbf{L}$ is connected reductive, $Z(\textbf{L})^F = Z(\textbf{L}^F)$, therefore $b$ has trivial defect and $mf(b) = 1$ by Proposition~\ref{lem:mainlemma} (d). If $\textbf{L}$ and $\textbf{G}$ are such that rk$(\textbf{G}) = $ rk$([\textbf{L},\textbf{L}]) + 1$, then dim($Z^{\circ}\left(\textbf{L})^F\right) = 1$. The Sylow $\ell$-subgroups of $Z^{\circ}(\textbf{L})^F$ are therefore isomorphic to subgroups of the multiplicative group $\textbf{G}_m$, so they are cyclic. By \cite[Proposition 2.2 (i)]{C/E}, since $\ell$ is good for $\textbf{G}$, $Z(\textbf{L})^F_{\ell}$ = $Z^{\circ}(\textbf{L})^F_{\ell}$, therefore $b$ has cyclic defect so $mf(b) = 1$ by Proposition~\ref{lem:mainlemma} (d). 

Now suppose that $\ell$ is bad for $\textbf{G}$, that $\textbf{L} = \textbf{G}$, and that $e=1$. By inspection of the character degrees given in \cite[Chapter 13]{C}, we see that cuspidal characters $\lambda$ of $\textbf{G}^F$ satisfy $\lambda(1)_{\ell}= |\textbf{G}^F|_{\ell}$, so $mf(b) = 1$ by Proposition~\ref{lem:mainlemma} (c). 

The remaining $\ell$-blocks will be handled on a case-by-case basis. First, suppose that $\textbf{G} = E_8$, $\textbf{L} = \phi_1.E_6$ and $\lambda = E_6[\theta^i]$ $(i= 1,2)$ with $\ell \geq 5$ and $e=1$. Then by Theorem~\ref{thrm:Puig}, $k\textbf{G}^Fb$ is Morita equivalent to $kNf$ where $N = N_{\textbf{G}^F} \left( \textbf{L}, \lambda \right)$ and $f = b_{\textbf{L}^F}(\lambda)$ is the block of $k\textbf{L}^F$ containing $\lambda$. Suppose that $P$ is a defect group of $k\textbf{L}^Ff$. Then since $\ell$ is odd and $W_{\textbf{G}^F}(\textbf{L}, \lambda) \cong D_{12}$ is an $\ell'$-group, $P$ is isomorphic to a Sylow $\ell$-subgroup of $Z(\textbf{L})^F$ by Theorem~\ref{thrm:key} (d). Since $N$ normalizes $\textbf{L}$, $P \unlhd N$ so $kNf$ has normal defect. Then by \cite[Theorem 45.12]{Thevenaz}, $kNf $ is Morita equivalent to a twisted algebra $k_{\alpha}(P \rtimes D_{12})$, where $\alpha \in H^2(D_{12}, k^{\times})$. Since $H^2(D_{12}, k^{\times}) \cong C_2$, it follows from the proof of Lemma~\ref{lem:twistedalg} that $mf\left( k_{\alpha} (P \rtimes D_{12})\right)=1$. Whence, $mf(b)=1$. 

Suppose now that $\textbf{G} = E_8$, $\textbf{L} = \phi_1.E_7$, $\lambda = \phi_{512,11}$ or $\phi_{512, 12}$, $\ell=5$ and $e=1$. The relative Weyl group $W_{\textbf{G}^F}\left(\textbf{L}, \lambda \right) \cong S_2$ has no non-trivial Sylow $\ell$-subgroups, so by Theorem~\ref{thrm:MainTheorem} (d) the defect groups of $b$ are isomorphic to a Sylow $\ell$-subgroup of $Z(\textbf{L})^F$. Note that rk$(\textbf{G}) = $ rk$([\textbf{L},\textbf{L}]) + 1$, so dim($Z^{\circ}(\textbf{L})^F) = 1$ and the Sylow $\ell$-subgroups of $Z^{\circ}(\textbf{L})^F$ are cyclic, as above. Again, using \cite[Proposition 2.2]{C/E}, $Z(\textbf{L})^F_{\ell} = Z^{\circ}(\textbf{L})^F_{\ell}$, so $b$ has cyclic defect and $mf(b) = 1$ by Proposition~\ref{lem:mainlemma} (d). 

Suppose that $\textbf{G} = E_7$, $\textbf{L} = \phi_1.E_6(q)$, $\lambda = E_6[\theta^i]$, $ (i=1,2)$, with $\ell=2$ and $e=1$. Then $b$ has dihedral defect by \cite[page 357]{E}. Therefore by Proposition~\ref{lem:mainlemma} (d), $mf(b) = 1$. 

Finally, suppose that we are in one of the following cases: $\textbf{G} = E_8$, $\textbf{L} = \phi_1^2.E_6$, $\lambda = E_6[\theta^i]$, $ (i=1,2)$, with $\ell=2$ and $e=1$; or $\textbf{G} = E_8$, $\textbf{L} = {\phi_2^2}.{^2E}_6$, $\lambda = {^2E_6}[\theta^i]$, $ (i=1,2)$, with $\ell \neq 3$ and $e=2$. From \cite{G} we know that the character field of $\lambda$ is $\mathbb{Q}(\theta)$ where $\theta = $ exp$(\frac{2 \pi i}{3})$. Since $\ell \neq 3$, $\theta$ is an $\ell'$-root of unity so $\hat\sigma(\theta) =\theta^{\ell}$ (see Section~\ref{subsec:blocktheory}). If $\ell \equiv 1$ mod 3, then $\hat\sigma(\theta) = \theta$ so ${^{\hat\sigma}}{\lambda} = \lambda$. Therefore by the arguments of Lemma~\ref{lem:lambdarat}, $mf(b) = 1$. If $ \ell \equiv 2$ mod 3, however, then $\hat\sigma(\theta) = \theta^2 \neq \theta$ so we cannot conclude that $mf(b)=1$. Because $\hat\sigma^2(\theta) = \theta^4 = \theta$, however, it follows that ${^{\hat\sigma^2}}{\lambda} = \lambda$, so $mf(b)$ is at most 2.
\end{proof}

\begin{corollary}
\label{cor:maincor}
Let $\textbf{G}$, $F$ and $k$ be as in Theorem~\ref{thrm:MainTheorem} and suppose that $\textbf{G}^F$ has non trivial centre. Let $Z$ be a central subgroup of $\textbf{G}^F$ and suppose that $\overline{b}$ is a block of $k(\textbf{G}^F/Z)$ dominated by a unipotent block $b$ of $k\textbf{G}^F$. Then $mf(\overline{b})= 1$. 
\end{corollary}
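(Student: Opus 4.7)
The plan is to exploit the hypothesis that $\textbf{G}^F$ has non-trivial centre to dismiss the exceptional cases of Theorem~\ref{thrm:MainTheorem}, and then to propagate $mf(b) = 1$ down to $mf(\overline{b}) = 1$. First I would observe that $\textbf{G}$ must be one of the types $A$, $B$, $C$, $D$, $E_6$, ${^2E}_6$, or $E_7$, since in all other types (notably $E_8$, but also $F_4$ and $G_2$) the finite group $\textbf{G}^F$ has trivial centre. The two excluded cases of Theorem~\ref{thrm:MainTheorem} occur only for $\textbf{G} = E_8$, so Theorem~\ref{thrm:MainTheorem} gives $mf(b) = 1$ outright.

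Next I would reduce to the case where $Z$ is an $\ell$-subgroup. Writing $Z = Z_{\ell'} \times Z_\ell$ and applying Lemma~\ref{lem:domblocks}(d) to the quotient $\textbf{G}^F \to \textbf{G}^F/Z_{\ell'}$, the block $b$ dominates a unique block $b'$ of $k(\textbf{G}^F/Z_{\ell'})$ with $k\textbf{G}^F b \cong k(\textbf{G}^F/Z_{\ell'}) b'$ as $k$-algebras, so $mf(b') = 1$. The further quotient map $\textbf{G}^F/Z_{\ell'} \to \textbf{G}^F/Z$ has kernel $Z/Z_{\ell'} \cong Z_\ell$, and $\overline{b}$ is the unique (by Lemma~\ref{lem:domblocks}(c)) block dominated by $b'$ under it. Relabelling, I may assume $Z$ itself is a non-trivial central $\ell$-subgroup; then $Z$ is contained in every defect group $P$ of $b$, and $P/Z$ is a defect group of $\overline{b}$.

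I would then split the analysis according to which argument established $mf(b) = 1$ in the proof of Theorem~\ref{thrm:MainTheorem}. If that argument used Lemma~\ref{lem:lambdarat}---as it does for all classical types (and ${^3D}_4$), and for the exceptional-type blocks whose $e$-cuspidal character $\lambda$ is rational---then Lemma~\ref{lem:lambdarat} actually shows $\sigma(b) = b$; Lemma~\ref{lem:domblocks}(b) then forces $\sigma(\overline{b}) = \overline{b}$, and the argument of Proposition~\ref{lem:mainlemma}(a) yields $mf(\overline{b}) = 1$. If instead the argument used Proposition~\ref{lem:mainlemma}(d) via cyclic or dihedral defect---as happens for the good-$\ell$ cyclic-defect cases in $E_6$, ${^2E}_6$, $E_7$ with $\mathrm{rk}(\textbf{G}) = \mathrm{rk}([\textbf{L}, \textbf{L}]) + 1$, and for the dihedral case $(E_7, \phi_1.E_6, E_6[\theta^i])$ at $\ell = 2$, $e = 1$---then $P/Z$ remains cyclic (quotient of a cyclic group) or dihedral (since $Z \leq Z(P)$, a dihedral $D_{2n}$ modulo its central $C_2$ is dihedral $D_n$, including the Klein-four case, which is still covered by Erdmann's classification). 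Thus Proposition~\ref{lem:mainlemma}(d) applies to $\overline{b}$. The only other strand of the proof of Theorem~\ref{thrm:MainTheorem} applicable here---blocks of trivial defect handled via Proposition~\ref{lem:mainlemma}(c) or by the good-$\ell$, $\textbf{L} = \textbf{G}$ argument---is incompatible with $Z$ being a non-trivial $\ell$-subgroup, so such blocks have already been dispatched by the $\ell'$-reduction.

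The main delicate point I expect is confirming that quotienting a dihedral defect group by its central $C_2$ keeps us inside the framework where Proposition~\ref{lem:mainlemma}(d) applies (in particular that the Klein-four case $V_4$ is still handled), and checking that in each branch of the proof of Theorem~\ref{thrm:MainTheorem} the argument either establishes $\sigma(b) = b$ outright or yields cyclic or dihedral defect. Beyond that, the remainder is routine bookkeeping with the lemmas of Section~\ref{subsec:domblocks}.
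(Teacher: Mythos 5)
Your proof takes essentially the same approach as the paper: non-trivial centre rules out $E_8$ so Theorem~\ref{thrm:MainTheorem} gives $mf(b)=1$, and then one splits into the two cases where the proof of that theorem showed $\sigma(b)=b$ (propagated via Lemma~\ref{lem:domblocks}) or where $b$ has trivial/cyclic/dihedral defect (propagated to the quotient defect group via Proposition~\ref{lem:mainlemma}(d)). The one organizational difference is that you insert an explicit $\ell'$-reduction and argue about $P/Z_\ell$ by hand (worrying about $D_8/C_2\cong V_4$), whereas the paper avoids this by citing a theorem of Nagao--Tsushima on defect groups of dominated blocks to conclude directly that the defect group of $\overline{b}$ is again trivial, cyclic or dihedral; your route is slightly more elementary but must then confirm that the Klein four case is still covered, which you do. One small slip: when you write that Lemma~\ref{lem:domblocks}(b) alone ``forces $\sigma(\overline{b})=\overline{b}$,'' you need part~(c) as well --- (b) only gives that $b$ dominates both $\overline{b}$ and $\sigma(\overline{b})$, and uniqueness of the dominated block (which holds because $Z$ is central) is what pins down $\sigma(\overline{b})=\overline{b}$; you invoke (c) correctly earlier, so this is a citation gap rather than a conceptual one.
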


\begin{proof}
The assumption that $\textbf{G}^F$ has non trivial centre means that we do not consider the case where $\textbf{G}^F= E_8(q)$. Thus, for any unipotent block $b$ of $kG$, it follows from the proof of Theorem~\ref{thrm:MainTheorem} that either $\sigma(b) = b$, or $b$ has either trivial, cyclic or dihedral defect.

First suppose that $\overline{b}$ is dominated by a unipotent block $b$ of $kG$ such that $\sigma(b) = b$. Then by Lemma~\ref{lem:domblocks} (b), $\sigma\left(\overline{b}\right)$ is also dominated by $b$. Since $Z$ is central, it then follows from part (c) of Lemma~\ref{lem:domblocks} that $\sigma\left(\overline{b}\right) = \overline{b}$. Therefore $k\left(\textbf{G}^F/Z\right)\overline{b} \cong k\left(\textbf{G}^F/Z\right)\overline{b}^{(\ell)}$ as $k$-algebras by Lemma~\ref{lem:galconj}, so $frob\left(\overline{b}\right) = 1$, hence $mf\left(\overline{b}\right) =~1$. 

Now suppose that $\overline{b}$ is dominated by a unipotent block $b$ of $kG$ which has either trivial, cyclic or dihedral defect. Then by \cite[Ch.5 Theorem 8.7 (ii)]{N/T}, the defect groups of $\overline{b}$ are also either trivial, cyclic or dihedral. Therefore $mf\left(\overline{b}\right) = 1$ by Proposition~\ref{lem:mainlemma} (d).
\end{proof}

\begin{theorem}
\label{thrm:suzree}
Let $G$ be a Suzuki or Ree group in non-defining characteristic. Let $b$ be a block of $kG$, and if $G$ is the large Ree group, assume that $b$ is unipotent. Then $mf(b) = 1$. 
\end{theorem}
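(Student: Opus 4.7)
The plan is to treat the Suzuki, small Ree, and large Ree families separately, in each case reducing to Proposition~\ref{lem:mainlemma} or to a direct adaptation of the proof of Theorem~\ref{thrm:MainTheorem}.

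For the Suzuki groups ${}^2B_2(q)$ in non-defining characteristic $\ell$ is odd. All maximal tori of ${}^2B_2(q)$ are cyclic, so every Sylow $\ell$-subgroup is cyclic and every $\ell$-block has cyclic (possibly trivial) defect. Proposition~\ref{lem:mainlemma}(d) then gives $mf(b)=1$. Exactly the same argument works for the small Ree groups ${}^2G_2(q)$ and any prime $\ell \notin \{2,3\}$, since the maximal tori of ${}^2G_2(q)$ are also all cyclic.

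For the small Ree groups ${}^2G_2(q)$ with $\ell=2$, a theorem of Ward says the Sylow $2$-subgroup is elementary abelian of order $8$ and is self-centralising. The principal $2$-block contains the trivial character, so it is fixed by Galois conjugation and $mf=1$ by Proposition~\ref{lem:mainlemma}(b); any other $2$-block has strictly smaller defect, and a short analysis of the $2$-local structure (using the self-centralising Sylow) shows that such defect groups are cyclic, so Proposition~\ref{lem:mainlemma}(d) completes this case.

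Finally, for the large Ree groups ${}^2F_4(q)$ the statement concerns only unipotent blocks, and I would mimic the proof of Theorem~\ref{thrm:MainTheorem}. The $e$-Harish Chandra theory of Brou\'e--Malle--Michel and Enguehard is known to extend to the Ree setting, so Theorem~\ref{thrm:key} writes each unipotent block as $b=b_{\textbf{G}^F}(\textbf{L},\lambda)$, and Lemma~\ref{lem:lambdarat} reduces us to those pairs for which $\lambda$ is non-rational. The list of such pairs is short: $\lambda$ is one of the non-rational cuspidal unipotent characters of ${}^2F_4(q)$ (for instance ${}^2F_4[\pm i]$ or ${}^2F_4[\theta^{i}]$), and $\textbf{L}$ is either $\textbf{G}$ itself or a Levi with a ${}^2B_2$ component. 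A case-by-case check using Theorem~\ref{thrm:key}(d) and Malle's tables of unipotent characters should show that in every instance the defect group is cyclic, so that Proposition~\ref{lem:mainlemma}(d) concludes, or else that a suitable power of $\hat\sigma$ fixes the character field of $\lambda$, so that $\sigma(b)=b$ by the argument of Lemma~\ref{lem:lambdarat}. The main obstacle is this final case analysis: as for the $E_8$ cases of Theorem~\ref{thrm:MainTheorem}, one must in each pair rule out the combination of an irrational Galois twist with a non-cyclic defect group, though the small rank of ${}^2F_4$ keeps the list short and I would expect no $E_8$-like exceptions to arise.
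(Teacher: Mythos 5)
For the Suzuki groups and for the small Ree groups at primes $\ell\geq 5$ your reduction is essentially the paper's: both arguments conclude that the Sylow $\ell$-subgroups are cyclic and invoke Proposition~\ref{lem:mainlemma}(d). You justify this via the cyclicity of the maximal tori; the paper instead cites Suzuki's theorem that odd-order subgroups of ${}^2B_2(q^2)$ are cyclic, and for ${}^2G_2(q^2)$ it observes that $\ell$ divides exactly one cyclotomic factor of $|G|$. These are interchangeable routes to the same conclusion.

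The $\ell=2$ case for ${}^2G_2(q^2)$ contains a real gap. A proper subgroup of an elementary abelian group of order $8$ need not be cyclic: it may be the Klein four group $C_2\times C_2$. Your assertion that ``such defect groups are cyclic'' is therefore unjustified, and the promised ``short analysis of the $2$-local structure'' is not actually supplied. The paper avoids this by noting that the non-principal defect groups are \emph{cyclic or dihedral} (the Klein four group being dihedral of order~$4$), and Proposition~\ref{lem:mainlemma}(d) covers both. You should be aiming for cyclic-or-dihedral, not cyclic.

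For the large Ree group ${}^2F_4(q^2)$ your plan diverges from the paper's and is not carried out. You propose to extend Theorem~\ref{thrm:key} and Lemma~\ref{lem:lambdarat} to the very twisted setting and then do a case check over the non-rational $e$-cuspidal pairs; but you neither cite a reference establishing $e$-Harish--Chandra theory and block parametrisation for ${}^2F_4$, nor perform the case analysis, and you end with ``I would expect no $E_8$-like exceptions to arise,'' which is an expectation rather than an argument. The paper instead uses Malle's explicit classification of the unipotent $\ell$-blocks of ${}^2F_4(q^2)$: when $\ell\nmid (q^2-1)$ every non-principal unipotent block has trivial defect, and when $\ell\mid(q^2-1)$ the non-principal unipotent blocks have cyclic defect, so Proposition~\ref{lem:mainlemma}(b) and (d) finish in each case. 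Your route is plausible in outline, but as written this case is incomplete.
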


\begin{proof}
First let $G$ be the Suzuki group, $^2B_2(q^2)$ ($q=2^{2m+1}$), and let $b$ be a $\ell$-block of $G$ with $\ell \neq 2$. The subgroups of $G$ of odd order are cyclic \cite[Theorem 9]{S}, so $b$ has cyclic defect and therefore $mf(b) = 1$ by Proposition~\ref{lem:mainlemma} (d).

Next let $G$ be the small Ree group, $^2G_2(q^2)$ ($q=3^{2m+1}$), and let $b$ be a 2-block of $G$. The Sylow 2-subgroups of $G$ are elementary abelian of order 8 and \cite[I. 8]{W} shows that the only 2-block of $G$ of full defect is the principal block, which contains the rational valued trivial character. If $b$ is not the principal block, then the defect groups of $b$ are proper subgroups of an elementary abelian group of order 8, so $b$ either has dihedral or cyclic defect. Therefore $mf(b) = 1$ by Proposition~\ref{lem:mainlemma} (b) and (d). 

Now let $G$ be the small Ree group and $\ell \geq 5$, and let $b$ be an $\ell$-block of $G$. The order of $G$ is $|G| = q^6\phi_1\phi_2\phi_4\phi_{12}$ with $q = 3^{2m+1}$ for some $m$. Since $\ell$ divides only one $\phi_i$ for some $i \in \{1, 2, 4, 12\}$, by \cite[Corollary 3.13 (2)]{A/B} the Sylow $\ell$-subgroups of $G$ are cyclic. Therefore $b$ has cyclic defect and $mf(b) = 1$ by Proposition~\ref{lem:mainlemma} (d).

Finally, let $G$ be the large Ree group, $^2F_4(q^2)$ ($q=2^{2m+1}$), and let $b$ be a unipotent $\ell$-block of $G$ with $\ell \neq 2$. By \cite{M}, there are two cases to consider. In the first case we suppose that $\ell \ndivides (q^2 - 1)$. Then $b$ is either the principal block of $G$, or $b$ has trivial defect and therefore $mf(b) = 1$ by Proposition~\ref{lem:mainlemma} (b) and (d). In the second case, suppose that $\ell \divides (q^2 - 1)$. Then $b$ contains one of the following sets of characters (notation as per \cite[Appendix D]{H1}): $\{ \chi_1, \chi_2, \chi_3, \chi_4, \chi_9, \chi_{10}, \chi_{11}\}$, $\{ \chi_5, \chi_7\}$ or $\{ \chi_6, \chi_8\}$. In the first case $b$ is the principal block, and in the second and third cases $b$ has cyclic defect \cite[Appendix D]{H1}, so $mf(b) = 1$ by Proposition~\ref{lem:mainlemma} (b) and (d).
\end{proof}

%%%%%%%%%%%%%%%%%%%
\section{EXCEPTIONAL COVERING GROUPS}
\label{sec:proof}
%%%%%%%%%%%%%%%%%%%

\begin{lemma}
\label{lem:lgroupext}
Let $G$ be a finite group. Suppose that there exists a finite group $\hat{G}$ such that $G \unlhd \hat{G}$ and such that for all blocks $B$ of $k\hat{G}$, either $B$ has cyclic defect or $\sigma(B) = B$. Then $mf(b) = 1$ for all blocks $b$ of $kG$. 
\end{lemma}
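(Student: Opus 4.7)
The plan is to start with an arbitrary block $b$ of $kG$, produce a block $B$ of $k\hat G$ covering it, and then dispatch the two cases separately using results already developed in the paper. Since $1_{kG} = 1_{k\hat G}$ and both decompose as sums of block idempotents in their respective algebras, there is at least one block $B$ of $k\hat G$ with $bB \neq 0$, i.e.\ a block $B$ of $k\hat G$ covering $b$. Fix such a $B$.

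Case 1: $B$ has cyclic defect. By the standard result on defect groups under block covering for a normal subgroup (Knörr's theorem, applied to $G \unlhd \hat G$), there exists a defect group $D$ of $B$ such that $D \cap G$ is a defect group of $b$. A subgroup of a cyclic group is cyclic, so $b$ has cyclic defect and Proposition~\ref{lem:mainlemma}(d) immediately yields $mf(b) = 1$.

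Case 2: $\sigma(B) = B$. Applying the ring automorphism $\sigma$ to the relation $bB \neq 0$ gives $\sigma(b)\,\sigma(B) = \sigma(b)\,B \neq 0$, so $B$ also covers $\sigma(b)$. By Clifford theory for blocks (namely, the blocks of the normal subgroup $G$ that are covered by a fixed block of $\hat G$ form a single $\hat G$-orbit under conjugation), there exists $h \in \hat G$ with $\sigma(b) = {}^h b$. Because $G \unlhd \hat G$, conjugation by $h$ restricts to a group automorphism $\varphi \in \mathrm{Aut}(G)$, and the induced $k$-algebra automorphism of $kG$ sends $b$ to $\sigma(b)$. Lemma~\ref{lem:groupaut} then gives $mf(b) = 1$.

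Since every block of $kG$ falls into one of the two cases, the conclusion follows. The only subtle step is the Clifford-theoretic transitivity used in Case~2 (and the parallel defect-transfer fact in Case~1); both are standard tools in block theory for normal subgroups, and the rest of the argument is a direct application of lemmas already established earlier in the paper.
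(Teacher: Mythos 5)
Your proposal is correct and follows essentially the same two-case argument as the paper: cyclic defect passes down to the covered block (the paper omits the explicit Knörr citation but uses the same fact), and $\sigma$-invariance of $B$ forces $b$ and $\sigma(b)$ to be $\hat G$-conjugate, yielding the needed automorphism for Lemma~\ref{lem:groupaut}. Your added explicit references to Knörr's theorem and the Clifford-theoretic orbit-transitivity are the standard justifications the paper leaves implicit.
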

\begin{proof}
First suppose that $b$ is covered by some block $B$ of $k\hat{G}$ which has cyclic defect. Then the defect groups of $b$ are also cyclic, therefore $mf(b) = 1$ by Proposition~\ref{lem:mainlemma} (d). 

Now suppose that $b$ is covered by a block $B$ of $k\hat{G}$ such that $\sigma(B) = B$. Recall that $B$ covers $b$ if and only if $Bb \neq 0$, and note that this holds if and only if $\sigma(B)\sigma(b) \neq 0$. Therefore, since $\sigma(B) = B$, $\sigma(b)$ is also covered by $B$. Hence $b$ and $\sigma(b)$ are in the same $\hat{G}$-orbit, so there is a group automorphism of $G$ whose induced $k$-algebra automorphism of $kG$ sends $b$ to $\sigma(b)$. Therefore $mf(b) = 1$ by Lemma~\ref{lem:groupaut}. 
\end{proof}

\begin{lemma}
\label{lem:except}
Let $\textbf{G}$ be a simple, simply-connected algebraic group defined over an algebraic closure of the field of $p$ elements. Let $q$ be a power of $p$ and let $F: \textbf{G} \rightarrow \textbf{G}$ be a Steinberg morphism with respect to an $\mathbb{F}_q$-structure with finite group of fixed points, $\textbf{G}^F$. Let $G$ be an exceptional cover of $\textbf{G}^F$ and let $b$ be a block of $kG$. Then $mf (b) = 1$. 
\end{lemma}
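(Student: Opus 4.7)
The plan is to apply Lemma~\ref{lem:lgroupext}. The first point is that the finite simple groups of Lie type possessing an exceptional Schur multiplier — equivalently, the simply-connected groups $\textbf{G}^F$ admitting an exceptional covering group — form a finite, explicit list of small-rank groups in small defining characteristic, tabulated for instance in the ATLAS. The candidates include covers of groups such as $L_2(4)$, $L_2(9)$, $L_3(2)$, $L_3(4)$, $L_4(2)$, $U_4(2)$, $U_4(3)$, $U_6(2)$, $S_6(2)$, $\Omega_7(3)$, $\Omega_8^+(2)$, $G_2(3)$, $G_2(4)$, $F_4(2)$, ${}^2B_2(8)$ and ${}^2E_6(2)$. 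Hence the problem reduces to a finite case-by-case check.

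For each exceptional cover $G$ appearing in this list, I would select an overgroup $\hat{G}$ containing $G$ as a normal subgroup — typically the preimage in ${\rm Aut}(G/Z(G))$ of a suitable subgroup, often the full automorphism group itself — and verify that every $\ell$-block $B$ of $k\hat{G}$ is either of cyclic defect or satisfies $\sigma(B) = B$. Once this verification is in place, Lemma~\ref{lem:lgroupext} immediately yields $mf(b) = 1$ for every block $b$ of $kG$, since each such $b$ is covered by some block $B$ of $k\hat{G}$ and inherits one of the two alternative hypotheses.

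The verification itself is carried out using GAP~\cite{GAP4} and the ATLAS character tables, mirroring the computation for covers of sporadic groups mentioned in the introduction. Concretely, for each relevant pair $(\hat{G}, \ell)$, the blocks of $k\hat{G}$ and their defect groups are read off from the character table and block distribution of irreducible characters, while Galois stability of a block $B$ is detected via Lemma~\ref{lem:sigmahat}~(b) by testing whether the set ${\rm Irr}_K(B)$ is closed under the action $\chi \mapsto {}^{\hat{\sigma}}\chi$ for the primes $\ell$ in question.

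The main obstacle is ensuring that, for every relevant $(G, \ell)$, an overgroup $\hat{G}$ exists whose blocks satisfy the dichotomy required by Lemma~\ref{lem:lgroupext}. For most entries in the list this dichotomy is forced because the exceptional part of the centre is an $\ell'$-group (allowing a reduction via Lemma~\ref{lem:domblocks}~(d) to already-handled cases), or because the defect groups involved are automatically cyclic. The genuinely delicate cases are the larger exceptional covers — notably the covers of ${}^2E_6(2)$, $U_6(2)$ and $\Omega_8^+(2)$ at the small primes $\ell = 2, 3$ — where there exist blocks of non-cyclic defect for which one must explicitly confirm Galois stability at the level of $\hat{G}$ rather than appeal to Proposition~\ref{lem:mainlemma}~(d).
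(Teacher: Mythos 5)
Your overall strategy — reduce to the finite ATLAS list of exceptional covers, compute in GAP, and invoke Lemma~\ref{lem:lgroupext} via a well-chosen overgroup $\hat{G}$ — is the same strategy as the paper, and for most entries it works. But the plan has a genuine gap: you assume that for every relevant pair $(G,\ell)$ a suitable overgroup $\hat{G}$ can be found, and this is false. The paper explicitly notes that for $G = 4^2.PSL_3(4)$ at $\ell = 3$ there is \emph{no} overgroup $\hat{G}$ in which every block has cyclic defect or is Galois-stable, so Lemma~\ref{lem:lgroupext} cannot be applied at the level of $G$ itself. Your fallback remark about reducing via Lemma~\ref{lem:domblocks}~(d) when the centre is an $\ell'$-group only disposes of those blocks of $G$ that dominate a block of $PSL_3(4)$; the blocks that cover a \emph{non-principal} block of the centre $Z \cong C_4 \times C_4$ require an additional step you don't supply — namely, replacing $G$ by the quotient $G/Z_\mu$, where $Z_\mu = \ker\mu$ for the linear character $\mu$ of $Z$ lying under $b$, identifying the resulting group as $2.PSL_3(4)$, $4_1.PSL_3(4)$ or $4_2.PSL_3(4)$, and only then applying GAP together with Lemma~\ref{lem:lgroupext} (with a suitable $.2$ extension) to those quotients.

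Two further calibration issues. First, the paper's primary test is simpler than what you propose: it checks directly in GAP whether each block of $kG$ is principal, has cyclic defect, or contains a rational-valued character (Proposition~\ref{lem:mainlemma}), and only falls back on Lemma~\ref{lem:lgroupext} when that direct check fails; your proposal routes everything through $\hat{G}$, which is both more work and, as noted, not always available. Second, your guess at the ``genuinely delicate'' cases (covers of $^2E_6(2)$, $U_6(2)$, $\Omega_8^+(2)$) is off — those are all disposed of by the direct check. The problematic cases are actually the small covers $3^2.PSU_4(3)$, $3.O_7(3)$, $3.A_6$, $6.A_6$, $3.G_2(3)$, $3.A_7$ at $\ell = 2$ (handled by Lemma~\ref{lem:lgroupext}) and $4^2.PSL_3(4)$ at $\ell = 3$ (which needs the extra reduction described above).
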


\begin{proof}
If $G$ is not from the following list: $3^2.PSU_4(3)$, $3.O_7(3)$, $3.A_6$, $6.A_6$, $3.G_2(3)$, $3.A7$ with $\ell = 2$; or $G = 4^2.PSL_3(4)$ with $\ell=3$; then using GAP \cite{GAP4} and Proposition~\ref{lem:mainlemma}, it can be shown that every block $b$ of $kG$ satisfies at least one of the following three properties: is a principal block, has cyclic defect, or contains a rational valued character, and therefore $mf(b) = 1$. 

If $\ell = 2$ and $G$ is one of $3^2.PSU_4(3)$, $3.O_7(3)$, $3.A_6$, $6.A_6$, $3.G_2(3)$ or $3.A7$, then there are some blocks of $kG$ for which none of these three properties hold. For these groups, however, there exist finite groups $\hat{G}$ such that $G \unlhd \hat{G}$ and such that for every block $B$ of $k\hat{G}$, either $B$ has cyclic defect or $\sigma(B) = B$. Therefore by Lemma~\ref{lem:lgroupext}, $mf(b) = 1$ for all blocks $b$ of $kG$. 

Finally, suppose $G = 4^2.PSL_3(4)$ and $\ell = 3$. Then there are blocks of $kG$ which don't satisfy any of the three properties above, and there is also no suitable $\hat{G}$ which would allow us to apply Lemma~\ref{lem:lgroupext}. Let $G'= PSL_3(4)$ and $Z = C_4 \times C_4$ so $G = Z.G'$. First suppose that $b$ dominates a block $b'$ of $kG'$. Using GAP \cite{GAP4}, as before we can verify that all blocks of $kG'$ satisfy at least one of the three propertie above, so $mf(b')=1$. Since $Z$ is an $\ell'$-group, $b'$ is the unique block dominated by $b$, so by Lemma~\ref{lem:domblocks} (d), $kGb \cong kG'b'$ as $k$-algebras. Therefore $mf(b) = 1$. 

Now suppose that $b$ does not dominate any block of $kG'$. Then by Lemma~\ref{lem:domblocks} (a), $b$ covers a non-principal block of $kZ$. Since $Z$ is an abelian $\ell'$-group, $kZ$ has one linear character in each block. Suppose $b$ covers a block of $kZ$ containing non-trivial character $\mu$, and let $Z_{\mu} = $ ker$\mu$. Then $b$ dominates a unique block $\overline{b}$ of $k(G/Z_{\mu})$, by Lemma~\ref{lem:domblocks} (d), and $mf(b) = mf(\overline{b})$. If $G/Z_{\mu} \cong 2.PSL_3(4)$ then we can once again use GAP \cite{GAP4} to show all blocks of $k(G/Z_{\mu})$ satisfy one of the three properties above, so $mf(\overline{b}) = 1$. If $G/Z_{\mu} \cong 4_1.PSL_3(4)$ or $4_2.PSL_3(4)$, then there are blocks of $k(G/Z_{\mu})$ which don't satisfy any of the three properties. However, in these two cases there exist outer automorphisms of $G/Z_{\mu}$ of order two such that for every block $B$ of $k\left((G/Z_{\mu}).2\right)$, either $B$ has cyclic defect, or $\sigma(B) = B$. Therefore $mf(\overline{b}) = 1$ by Proposition~\ref{lem:mainlemma} and Lemma~\ref{lem:lgroupext}, as required.
\end{proof}

%%%%%%%%%%%%%%%%%%%
\section{PROOF OF THEOREM~\ref{thrm:maintheorem}}
\label{sec:mainproof}
%%%%%%%%%%%%%%%%%%%

\begin{proof}
Part (a) is shown in Theorem~\ref{thrm:alt}. The result follows for exceptional covering groups of finite groups of Lie type by Lemma~\ref{lem:except}. The remainder of part (b) follows from Corollary~\ref{corr:defining}. Part (c) is shown for the Suzuki and Ree groups in Theorem~\ref{thrm:suzree}, and for all remaining cases in Corollary~\ref{cor:maincor}. 
\end{proof}

\bibliographystyle{acm}
\bibliography{BIBLIOGRAPHY}

\end{document}